\pgfplotsset{compat=1.18}
\numberwithin{equation}{section}
\newcommand{\N}{\mathbb{N}}
\newcommand{\R}{\mathbb{R}}
\newcommand{\C}{\mathbb{C}}
\newcommand{\vp}{\varepsilon}
\newtheorem{thm}{Theorem}[section]
\newtheorem{cor}[thm]{Corollary}
\newtheorem{prop}[thm]{Proposition}
\newtheorem{prob}[thm]{Problem}
\begin{document}

\title{Declipping and the recovery of vectors from saturated measurements}

\author[W. Alharbi]{ Wedad Alharbi }
\author[D. Freeman]{ Daniel Freeman}\author[D. Ghoreishi]{ Dorsa Ghoreishi}\author[B. Johnson]{Brody Johnson}\author[N.~L. Randrianarivony]{ N. Lovasoa Randrianarivony}
\address{Department of Mathematics and Statistics\\
Saint Louis University\\
St. Louis, MO  63103,  USA
} \email{wedad.alharbi@slu.edu}
\email{daniel.freeman@slu.edu}
\email{dorsa.ghoreishi@slu.edu}
\email{brody.johnson@slu.edu}
\email{nirina.randrianarivony@slu.edu}

\begin{abstract}

A frame $(x_j)_{j\in J}$ for a Hilbert space $H$ allows for a linear and stable reconstruction of any vector $x\in H$ from the linear measurements $(\langle x,x_j\rangle)_{j\in J}$. However, there are many situations where some information in the frame coefficients is lost.  In applications where one is using sensors with a fixed dynamic range, any measurement above that range is registered as the maximum, and any measurement below that range is registered as the minimum. Depending on the context, recovering a vector from such measurements is called either declipping or saturation recovery. We initiate a frame theoretic approach to saturation recovery in a similar way to what \cite{BCE06} did for phase retrieval.  We characterize when saturation recovery is possible, show optimal frames for use with saturation recovery correspond to minimal multi-fold packings in projective space, and prove that the classical frame algorithm may be adapted to this non-linear problem to provide a reconstruction algorithm.

\end{abstract}

\thanks{2020 \textit{Mathematics Subject Classification}: 42C15, 46T20, 51F30, 94A12}

\thanks{The second and third authors were supported by NSF grant 2154931. }

\maketitle

\section{Introduction}

In audio recording and electrical engineering, clipping occurs whenever the amplitude of a signal exceeds a sensor's measurement limits.  Any measurement above the range is registered as the maximum, while any measurement below the range is registered as the minimum. In other applied situations where multiple sensors are taking discrete measurements, it is often the case that each sensor can only register values up to a certain magnitude, after which it becomes saturated.  See Figures \ref{fig:saturation-continuous} and \ref{fig:saturation-discrete}, respectively, for illustrations of saturation and its effect on continuous and discrete measurements.  Saturation is not a problem as long as one can choose sensors whose effective range allows accurate measurement for every signal of interest.  However, in many situations this is not feasible.  For instance, sensors for digital camera pixels can only measure a certain amount of brightness, and any brightness beyond that is simply output as the maximum.  Typically, the recovery of a vector from continuous measurements which have been corrupted in this way is called declipping, and the recovery of a vector from discrete measurements is called saturation recovery.  
As this problem arises in many different situations, there has been significant research on saturation recovery and declipping in applications.  Typically, methods for doing saturation recovery rely on assumptions about the structure of the signal being reconstructed, such as assumptions about the structure of human speech \cite{HS14,MH19, GKGB21} or the signal being approximable by a $k$-sparse vector as in compressed sensing \cite{LBDB11,FN17,FL18}. 
  The goal of this paper is to introduce a frame theoretic approach to saturation recovery which will provide a general mathematical foundation for understanding when saturation recovery is possible for all vectors in the unit ball of a Hilbert space.  This approach is analogous to what Balan, Casazza, and Edidin did for the use of frame theory in phase retrieval \cite{BCE06}.   

%(See Wedad's descriptions for the Figure \ref{fig:saturation-continuous} and Figure \ref{fig:saturation-discrete} --- as a comment in the TeX file.)
%To gain a deeper understanding of unsaturated and saturated frame coefficients, we have  created visual representations and we have selected values for $\lambda $ and $-\lambda$ to examine their impact on the original coefficients. Subsequently, we developed a code to implement saturation and examined the changes occurring between $\lambda$ and $-\lambda$.As a consequence of saturation,  information above $\lambda$ and below $-\lambda$ is lost on the grapgh on the left. Consequently, the graph on right  now contains reduced information. Despite this loss, we want to recover our vector $x$ from the saturated frame coefficients.%

\begin{figure}[htp]
\begin{minipage}[t]{0.49\textwidth}
\vspace{10pt}
\centering
\begin{tikzpicture}
\begin{axis}[width=2.75in, height=2.5in, domain=0:30, xmin=0, xmax=1, xtick={0,0.5,1}, xticklabels={$0$,$\frac{1}{2}$,$1$}, extra x ticks={}, extra x tick labels={}, extra x tick style={xticklabel style={xshift=-0.9ex}}, ytick={-1,-0.55,0,0.55,1}, yticklabels={$-1$,$-\lambda$,0,$\lambda$,1}, ymin=-1.125, ymax=1.125,legend pos=outer north east,legend cell align=left,axis lines=center,axis line style={shorten >=-10pt, shorten <=-10pt}, xlabel={$t$},x label style={at={(current axis.right of origin)},anchor=north, below=3mm,right=5mm},ylabel={$\langle x, x_{t}\rangle$},minor tick num = 1]

\addplot[domain=0:1,samples=201]{0.35*sin(11*2*pi*deg(x))+0.2*sin(2*3*pi*deg(x)+2*pi/3)-0.4*cos(2*5*pi*deg(x)};
\addplot[dashed,domain=0:1,samples=3]{0.55};
\addplot[dashed,domain=0:1,samples=3]{-0.55};
%\node[scale=0.75,anchor=west] at (axis cs: 2,2) {$x=2$};

\end{axis}
\end{tikzpicture}    

(a)
\end{minipage} \begin{minipage}[t]{0.49\textwidth}
\vspace{10pt}
\centering
\begin{tikzpicture}
\begin{axis}[width=2.75in, height=2.5in, domain=0:30, xmin=0, xmax=1, xtick={0,0.5,1}, xticklabels={$0$,$\frac{1}{2}$,$1$}, extra x ticks={}, extra x tick labels={}, extra x tick style={xticklabel style={xshift=-0.9ex}}, ytick={-1,-0.55,0,0.55,1}, yticklabels={$-1$,$-\lambda$,0,$\lambda$,1}, ymin=-1.125, ymax=1.125,legend pos=outer north east,legend cell align=left,axis lines=center,axis line style={shorten >=-10pt, shorten <=-10pt}, xlabel={$t$},x label style={at={(current axis.right of origin)},anchor=north, below=3mm,right=5mm},ylabel={$\Phi_{\lambda}(\langle x, x_{t}\rangle)$},minor tick num = 1]

\addplot[domain=0:1,samples=201]{sign(0.35*sin(11*2*pi*deg(x))+0.2*sin(2*3*pi*deg(x)+2*pi/3)-0.4*cos(2*5*pi*deg(x)))*min(0.55,abs(0.35*sin(11*2*pi*deg(x))+0.2*sin(2*3*pi*deg(x)+2*pi/3)-0.4*cos(2*5*pi*deg(x))))};
\addplot[dashed,domain=0:1,samples=3]{0.55};
\addplot[dashed,domain=0:1,samples=3]{-0.55};

%\node[scale=0.75,anchor=east] at (axis cs: 2,2) {$x=2$};

\end{axis}
\end{tikzpicture}   

(b)
\end{minipage}

\caption{Saturation of continuous frame coefficients: (a) Unsaturated frame coefficients (b) Saturated frame coefficients.} \label{fig:saturation-continuous}
\end{figure}
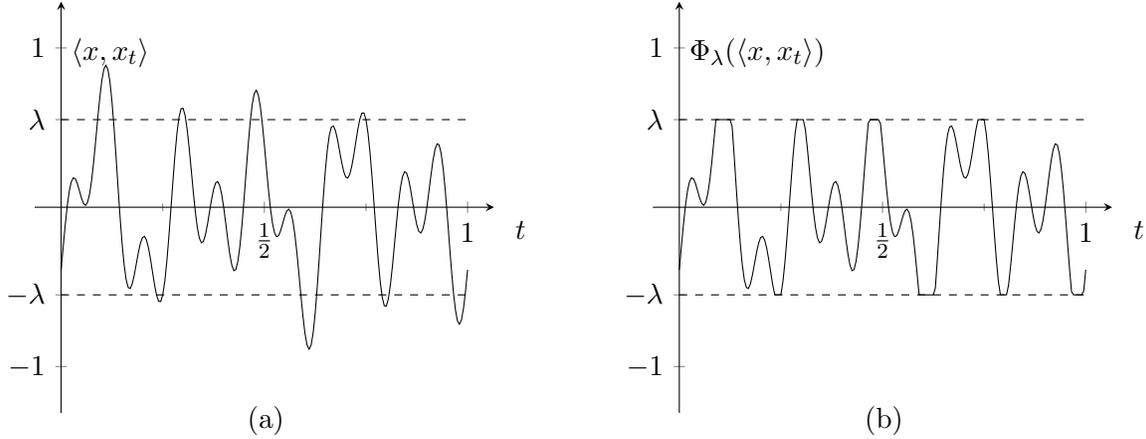

\begin{figure}[htp]
\begin{minipage}[t]{0.49\textwidth}
\vspace{10pt}
\centering
\begin{tikzpicture}
\begin{axis}[width=2.75in, height=2.5in, domain=0:30, xmin=0, xmax=30, xtick={0,5,10,15,20,25,30}, extra x ticks={}, extra x tick labels={}, extra x tick style={xticklabel style={xshift=-0.9ex}}, ytick={-1,-0.55,0,0.55,1}, yticklabels={$-1$,$-\lambda$,0,$\lambda$,1}, ymin=-1.125, ymax=1.125,legend pos=outer north east,legend cell align=left,axis lines=center,axis line style={shorten >=-10pt, shorten <=-10pt}, xlabel={$j$},x label style={at={(current axis.right of origin)},anchor=north, below=3mm,right=5mm},ylabel={$\langle x, x_{j}\rangle$}, minor tick num = 4]

\addplot+[ycomb,color=black,mark size=1.375pt,mark options={fill=black}] coordinates {(1,0.412092) (2,-0.936334) (3,-0.446154) (4,-0.907657) (5,-0.805736) (6,0.646916) (7,0.389657) (8,-0.365801) (9,0.900444) (10,-0.931108) (11,-0.122511) (12,-0.236883) (13,0.531034) (14,0.590400) (15,-0.626255) (16,-0.020471) 
(17,-0.108828) (18,0.292626) (19,0.418730) (20,0.509373) (21,-0.447950) (22,0.359405) (23,0.310196) (24,-0.674777) 
(25,-0.762005) (26,-0.003272) (27,0.919488) (28,-0.319229) (29,0.170536) (30,-0.552376) };

\addplot[dashed,domain=0:30,samples=3]{0.55};
\addplot[dashed,domain=0:30,samples=3]{-0.55};

\end{axis}
\end{tikzpicture}    

(a)
\end{minipage} \begin{minipage}[t]{0.49\textwidth}
\vspace{10pt}
\centering
\begin{tikzpicture}
\begin{axis}[width=2.75in, height=2.5in, domain=0:30, xmin=0, xmax=30, xtick={0,5,10,15,20,25,30}, extra x ticks={}, extra x tick labels={}, extra x tick style={xticklabel style={xshift=-0.9ex}}, ytick={-1,-0.55,0,0.55,1}, yticklabels={$-1$,$-\lambda$,0,$\lambda$,1}, ymin=-1.125, ymax=1.125,legend pos=outer north east,legend cell align=left,axis lines=center,axis line style={shorten >=-10pt, shorten <=-10pt}, xlabel={$j$},x label style={at={(current axis.right of origin)},anchor=north, below=3mm,right=5mm},ylabel={$\phi_{\lambda}(\langle x, x_{j}\rangle)$}, minor tick num = 4]

\addplot+[ycomb,color=black,mark size=1.375pt,mark options={fill=black}] coordinates {(1,0.412092) (2,-0.550000) (3,-0.446154) (4,-0.550000) (5,-0.550000) (6,0.550000) (7,0.389657) (8,-0.365801) 
(9,0.550000) (10,-0.550000) (11,-0.122511) (12,-0.236883) (13,0.531034) (14,0.550000) (15,-0.550000) (16,-0.020471) 
(17,-0.108828) (18,0.292626) (19,0.418730) (20,0.509373) (21,-0.447950) (22,0.359405) (23,0.310196) (24,-0.550000) 
(25,-0.550000) (26,-0.003272) (27,0.550000) (28,-0.319229) (29,0.170536) (30,-0.550000)};

\addplot[dashed,domain=0:30,samples=3]{0.55};
\addplot[dashed,domain=0:30,samples=3]{-0.55};

\end{axis}
\end{tikzpicture}

(b)
\end{minipage}

\caption{Saturation of discrete frame coefficients: (a) Unsaturated frame coefficients (b) Saturated frame coefficients.} \label{fig:saturation-discrete}
\end{figure}
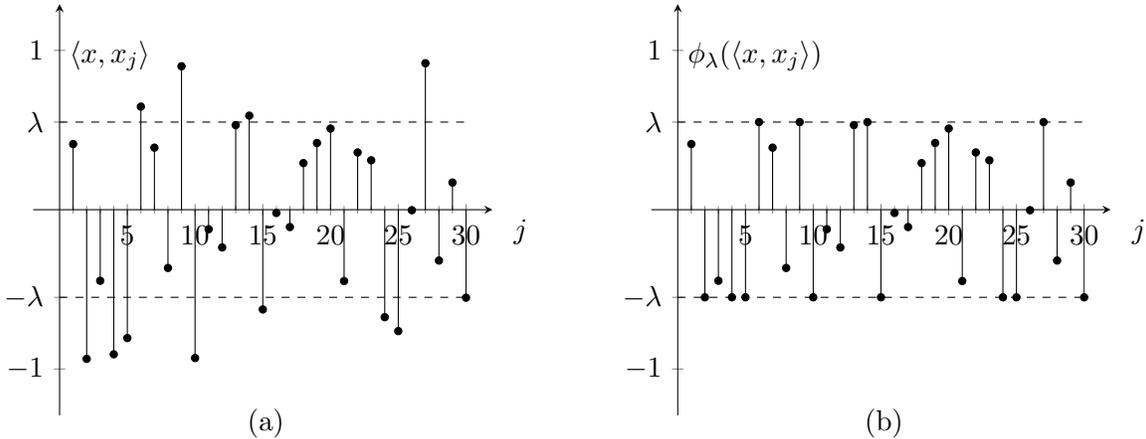

A {\em frame} for a Hilbert space $H$ is a collection of vectors $(x_j)_{j\in J}\subseteq H$  such that there exists universal constants $A,B>0$, called the {\em frame bounds}, which satisfy
\begin{equation}\label{E:framed}
A\|x\|^2\leq \sum_{j\in J}|\langle x,x_j\rangle|^2\leq B\|x\|^2\hspace{1cm}\textrm{ for all }x\in H.
\end{equation}
A frame is called {\em tight } if  \eqref{E:framed} is satisfied with $A=B$, and is called {\em Parseval} if \eqref{E:framed} is satisfied with $A=B=1$.
Note that $(x_j)_{j\in J}$ is a frame if and only if the {\em analysis operator}  $\Theta:H\rightarrow \ell_2(J)$ is an isomorphic embedding where $\Theta(x)=(\langle x,x_j\rangle)_{j\in J}$.  Furthermore, $(x_j)_{j\in J}$ is a Parseval frame if and only if the analysis operator is an isometric embedding.  

%In applications, the frame coefficients $\Theta(x)=(\langle x,x_j\rangle)_{j\in J}$ often arise as a collection of linear measurements of some signal or object of interest $x$.  It is then possible to recover $x$ from the linear measurements $\Theta x$ by simply applying the bounded linear operator $(\Theta^*\Theta)^{-1}\Theta^*$ to obtain $x=(\Theta^*\Theta)^{-1}\Theta^* (\Theta x)$.  Furthermore, if $(x_j)_{j\in J}$ has lower frame bound $A$ then this recovery map $\Theta x\mapsto x$ is $A^{-1/2}$-Lipschitz as $\|x-y\|\leq A^{-1/2}\|\Theta x-\Theta y\|$ for all $x,y\in H$.  

For $\lambda>0$, we consider the function $\phi_\lambda:\R\rightarrow [-\lambda,\lambda]$ given by $\phi_\lambda(t)=t$ if $|t|\leq \lambda$ and $\phi_\lambda(t)=\mathrm{sign}(t) \lambda$ if $|t|>\lambda$.  Given any index set $J$, we denote $\Phi_\lambda:\ell_2(J)\rightarrow \ell_2(J)$ to be the map $\Phi_\lambda((a_j)_{j\in J})=(\phi_\lambda(a_j))_{j\in J}$ for all $(a_j)_{j\in J}\in\ell_2(J)$.  Let $(x_j)_{j\in J}$ be a frame for a Hilbert space $H$ with analysis operator $\Theta:H\rightarrow \ell_2(J)$.  We say that $(x_j)_{j\in J}$ does {\em $\lambda$-saturation recovery on a set $X\subseteq H$} if the map $\Phi_\lambda \Theta|_X:X\rightarrow\ell_2(J)$ is one-to-one.  The case of saturation recovery in compressed sensing was considered in \cite{LBDB11,FN17,FL18} where $X$ is the set of $k$-sparse vectors of norm at most some $\alpha>0$.  We will be focusing on the case $X=\alpha B_H$ to be a scalar multiple of the closed unit ball of $H$, denoted $B_{H}$.
In Section \ref{S:SR}, we will characterize exactly when $\lambda$-saturation is possible on a set of the form $\alpha B_H$.  In particular, we prove that $\lambda$-saturation is possible on $\alpha B_{\R^n}$ if and only if at every point $x\in\alpha B_{\R^n}$ the unsaturated coordinates of $(x_j)_{j\in J}$ form a frame of $\R^n$.

In Section \ref{S:Min} we consider the problem of given $m,n\in\N$ what is the least $\lambda>0$ so that there exists a frame of $m$ unit vectors for $\R^n$ which does $\lambda$-saturation recovery on $B_{\R^n}$?  We show that this is equivalent to a multi-packing problem in real projective space.  The overlap in frame theory with packing problems in real projective space is an exciting and active area of research \cite{BK20,FIJK21,FJM18,FJM20,FJMP21,IJM20}.  However, this appears to be the first connection between frame theory and multi-packing problems in real projective space.  We hope that our results inspire a new avenue of inquiry in the area, and we pose multiple open problems.

We think of $(x_j)_{j\in J}$ doing $\lambda$-saturation recovery on a set $X\subseteq H$ as it being possible to recover any $x\in X$ from the values $\Phi_\lambda \Theta(x)=(\phi_\lambda(\langle x,x_j\rangle))_{j\in J}$.  However, it is important not only to be able to recover $x$, but that this recovery be stable as well.  We consider the problem of calculating the stability of saturation recovery in Section \ref{S:Stab}.  We say that $(x_j)_{j\in J}$ does $C$-stable $\lambda$-saturation recovery on $X$ for some $C>0$ if the recovery map $\Phi_\lambda \Theta(x)\mapsto x$ is $C$-Lipschitz on the range of $\Phi_\lambda \Theta|_{X}$.  That is, $(x_j)_{j\in J}$ does {\em $C$-stable $\lambda$-saturation recovery on $X$} if 
\begin{equation}\label{E:SSR}
    \|x-y\|\leq C\|\Phi_\lambda\Theta x-\Phi_\lambda\Theta y\|\hspace{1cm}\textrm{ for all }x,y\in X.
\end{equation}
We prove  whenever $(x_j)_{j\in J}$ is a finite frame of $\R^n$ that there exists a critical value $\lambda_c>0$ so that $(x_j)_{j\in J}$ does stable $\lambda$-saturation recovery on $B_{\R^n}$ for every $\lambda> \lambda_c$.  However,  it remains an open problem to determine if $(x_j)_{j\in J}$ always does stable $\lambda_c$-saturation recovery on $B_{\R^n}$.

The frame algorithm is an efficient method for recovering a vector $x\in H$ from the sequence of frame coefficients $(\langle x,x_j\rangle)_{j\in J}$, and it was presented in the same paper of Duffin and Schaeffer which first defined frames \cite{DS52}.  Gr\"ochenig later showed that the frame algorithm could be modified to guarantee even faster performance \cite{G93}.
In Section \ref{S:FA}, we give an adaptation of the frame algorithm which is able to recover any $x\in B_{H}$ from the values $(\phi_\lambda(\langle x,x_j\rangle))_{j\in J}$.  Naturally, the stability of $\lambda$-saturation recovery is directly related to the performance of any algorithm used to implement it.  We prove in Theorem \ref{T:FA_P} that if $(x_j)_{j\in J}$ is a Parseval frame for a Hilbert space $H$ which does $C$-stable $\lambda$-saturation recovery on $2\|x\|B_H$, then the sequence of approximations $(y_k)_{k=0}^\infty$ provided by the $\lambda$-saturated frame algorithm satisfies $\|x-y_k\|\leq (1-C^{-2})^{k/2} \|x\|$ for all $k\in\N$.  The $\lambda$-saturated frame algorithm that we introduce gives a non-linear formula for constructing a sequence of approximations to $x$.  Instead of using this non-linear adaptation, one could just use the linear frame algorithm on the unsaturated coordinates of $(x_j)_{j\in J}$.  We show in Theorem \ref{T:satfa} that, except in trivial circumstances, the guaranteed convergence rate for the $\lambda$-saturated frame algorithm will always outperform the guaranteed convergence rate for doing the linear frame algorithm on the unsaturated coordinates.  In Section \ref{S:exp}, we present the results of some numerical experiments illustrating this fact.

We note that the problem of recovering $x$ from the values $(\phi_\lambda(\langle x,x_j\rangle))_{j\in J}$ is one case of a collection of non-linear inverse problems.  For example, phase retrieval is the problem of recovering $x$ (up to a uni-modular scalar) from the values $(|\langle x,x_j\rangle|)_{j\in J}$ \cite{BCE06}.  The problem of inverting layers of neural nets is considered in \cite{HEB23}, where the goal is to recover $x$ from $(ReLU_\alpha(\langle x,x_j\rangle))_{j\in J} $.  Folding takes a different approach to the problem of measurement saturation by introducing a new nonlinearity before sensor measurements are taken, and this results in the problem of recovering $x$ from the values $(\mathrm{Mod}_{2\lambda}(\langle x,x_j\rangle +\lambda)-\lambda)_{j\in J}$ \cite{BKR20,FB22,FKLB22}.  Each of these non-linear functions leads to separate theorems and methods, and we identify throughout the paper some similarities and differences when presenting our results.

During the course of working on this project we were fortunate to engage in very helpful and interesting conversations with multiple mathematicians on this subject.  We give warm thanks in particular to Akram Aldroubi, Peter Balazs,  Martin Ehler, Daniel Haider, Ilya Krishtal, and Alex Powell.

\section{Saturation recovery}\label{S:SR}

Let $(x_j)_{j\in J}$ be a frame of a finite-dimensional Hilbert space $H$ and let $\lambda,\alpha>0$.  Note that $(x_j)_{j\in J}$ does $\lambda$-saturation recovery on the set $\alpha B_H$ if and only $(x_j)_{j\in J}$ does $C\lambda$-saturation recovery on the set $C\alpha B_H$ for every scalar $C>0$.  This allows us to simplify many of the results in this section by considering the case $\alpha=1$.

For $x\in H$, we denote the coordinates corresponding to the unsaturated frame coefficients of $x$ by $J_{\lambda}(x)=\{j\in J: |\langle x, x_j\rangle|\leq\lambda\}$.  Likewise, we denote $J_{\lambda}^{\sharp}(x)=\{j\in J: |\langle x, x_j\rangle|<\lambda\}$.  Observe that the upper frame bound of $(x_j)_{j\in J}$ limits the cardinality of $(J_{\lambda}(x))^{c}$, so this set must always be finite.  We now show that a frame does $\lambda$-saturation recovery if and only if the non-saturated frame coordinates form a frame for every vector in the ball.  The analogous result in the case of using the ReLU function instead of the saturation function $\phi_\lambda$ is proven in \cite{HEB23}.

\begin{thm}\label{T:1}
Let $(x_j)_{j\in J}$ be a frame for a finite dimensional Hilbert space $H$ and let $\lambda,\alpha>0$.  Then the following are equivalent.
\begin{enumerate}[label=\textup{(\alph*)}]
    \item $(x_j)_{j\in J}$ does $\lambda$-saturation recovery on $\alpha B_H$. \label{T:1a}
    \item For all $x\in H$ with $\|x\|<\alpha$, $(x_j)_{j\in J_{\lambda}^{\sharp}(x)}$ is a frame of $H$. \label{T:1b}
    \item For all $x\in H$ with $\|x\|\leq \alpha$, $(x_j)_{j\in J_{\lambda}(x)}$ is a frame of $H$. \label{T:1c}
\end{enumerate} 
\end{thm}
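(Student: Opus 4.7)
The plan is to establish the equivalence via the cycle $(c)\Rightarrow(a)\Rightarrow(b)\Rightarrow(c)$. The first and last implications admit short proofs by a midpoint-of-fibers argument and a scaling-down approximation argument respectively. The main obstacle is $(a)\Rightarrow(b)$: a direct perturbation at an alleged witness $x_0$ to a failure of (b) can be pinned down by the sign constraints coming from indices $j$ with $|\langle x_0, x_j\rangle|=\lambda$, so I plan to first scale $x_0$ slightly to push those boundary indices past the saturation threshold before perturbing.

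For $(c)\Rightarrow(a)$, given $x,y\in\alpha B_H$ with $\Phi_\lambda\Theta(x)=\Phi_\lambda\Theta(y)$, I set $w=(x+y)/2\in\alpha B_H$ and invoke the scalar observation that $\phi_\lambda(a)=\phi_\lambda(b)$ implies $\phi_\lambda((a+b)/2)=\phi_\lambda(a)$: either $a=b$, or $a$ and $b$ share a sign and both have absolute value $\geq\lambda$, and so does their average. This yields $\Phi_\lambda\Theta(w)=\Phi_\lambda\Theta(x)=\Phi_\lambda\Theta(y)$, and for each $j\in J_\lambda(w)$ forces $\langle x,x_j\rangle=\langle y,x_j\rangle=\langle w,x_j\rangle$; the case $|\langle w,x_j\rangle|<\lambda$ is immediate from the identity region of $\phi_\lambda$, and when $\langle w,x_j\rangle=\pm\lambda$ both summands in the average already have absolute value $\geq\lambda$ with matching sign, so averaging to $\pm\lambda$ forces both to equal $\pm\lambda$ exactly. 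Applying (c) at $w$ then gives $x=y$.

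For $(b)\Rightarrow(c)$, I fix $x$ with $\|x\|\leq\alpha$ and set $x_n=(1-1/n)x$ for $n\geq 2$, which has norm strictly less than $\alpha$; by (b), $(x_j)_{j\in J_\lambda^\sharp(x_n)}$ is a frame. For any $j\in J_\lambda(x)$, $(1-1/n)|\langle x,x_j\rangle|\leq(1-1/n)\lambda<\lambda$, so $J_\lambda(x)\subseteq J_\lambda^\sharp(x_n)$ for every $n$. Conversely the set $\{j:|\langle x,x_j\rangle|>\lambda\}$ is finite by the frame inequality, and for each such $j$ we have $(1-1/n)|\langle x,x_j\rangle|>\lambda$ for $n$ large; hence $J_\lambda^\sharp(x_n)=J_\lambda(x)$ for all sufficiently large $n$, and (c) holds at $x$.

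For the hardest implication $(a)\Rightarrow(b)$, I assume (b) fails at some $x_0$ with $\|x_0\|<\alpha$, so $V:=\mathrm{span}(x_j)_{j\in J_\lambda^\sharp(x_0)}^{\perp}\neq\{0\}$. The frame inequality forces $|\langle x_0,x_j\rangle|$ for $j\in J_\lambda^\sharp(x_0)$ to be uniformly bounded away from $\lambda$ (only finitely many indices can contribute values above $\lambda/2$, and each such value is strictly less than $\lambda$). Hence there exists $c>1$ with both $c\|x_0\|<\alpha$ and $J_\lambda^\sharp(cx_0)=J_\lambda^\sharp(x_0)$. At $x:=cx_0$, every index $j$ with $|\langle x_0,x_j\rangle|=\lambda$ satisfies $|\langle x,x_j\rangle|=c\lambda>\lambda$, so there are no boundary indices at $x$, while $V$ is unchanged. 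For any nonzero $z\in V$ and sufficiently small $\varepsilon>0$, the vector $y:=x+\varepsilon z$ lies in $\alpha B_H$, differs from $x$, and satisfies $\Phi_\lambda\Theta(y)=\Phi_\lambda\Theta(x)$: the indices in $J_\lambda^\sharp(x)$ contribute $\langle z,x_j\rangle=0$ by the definition of $V$, and the finitely many strictly saturated indices retain their $\phi_\lambda$-values by continuity. This contradicts (a).
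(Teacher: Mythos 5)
Your proof is correct, but it decomposes the equivalence as the cycle $(c)\Rightarrow(a)\Rightarrow(b)\Rightarrow(c)$, whereas the paper runs the cycle the other way, $(a)\Rightarrow(c)\Rightarrow(b)\Rightarrow(a)$; each of your implications is the mirror image of one of the paper's. Your $(a)\Rightarrow(b)$ is the same orthogonal-perturbation idea as the paper's $(a)\Rightarrow(c)$, except that you rescale \emph{up} by $c>1$ to push the boundary indices $|\langle x_0,x_j\rangle|=\lambda$ strictly past saturation (since you target $J_\lambda^\sharp$), while the paper rescales \emph{down} by $r<1$ while keeping the already strictly saturated indices saturated (since it targets $J_\lambda$); both hinge on the finiteness of the saturated index set, which you justify correctly. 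Your $(b)\Rightarrow(c)$ needs a limiting argument showing $J_\lambda^\sharp((1-1/n)x)=J_\lambda(x)$ for large $n$, slightly more work than the paper's one-line inclusion $J_\lambda(\|x\|^{-1}x)\subseteq J_\lambda^\sharp(x)$ going the other way, but it is sound. The one place your route genuinely buys something is $(c)\Rightarrow(a)$: because hypothesis $(c)$ applies on the closed ball, you can apply it directly at the midpoint $w=(x+y)/2$ and avoid the paper's case split in $(b)\Rightarrow(a)$ between ``one of the norms is $<1$'' and ``both norms equal $1$''; your scalar observation that $\phi_\lambda(a)=\phi_\lambda(b)$ forces $\phi_\lambda((a+b)/2)=\phi_\lambda(a)$, together with the rigidity at indices where $\langle w,x_j\rangle=\pm\lambda$, handles everything uniformly.
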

\begin{proof}
By scaling $\alpha$ and $\lambda$ by $1/\alpha$, we may assume without loss of generality that $\alpha=1$.
We first prove $ \ref{T:1a} \Rightarrow \ref{T:1c} $. 
 Suppose $x\in H$ with $0<\|x\|\leq 1$ and $ \mathrm{span} (x_j)_{j\in J_{\lambda}(x)} \ne H$. Now, we have $|\langle x, x_j\rangle| > \lambda\ $ for all $ j \in (J_{\lambda}(x))^{c}$. As $(J_{\lambda}(x))^c$ must be finite, there exists $0<r<1$ so that  $|\langle rx, x_j\rangle| > \lambda\ $ for all $ j \in (J_{\lambda}(x))^{c}$.   
 
 Let $z\in \mathrm{span} ( (x_j)_{j\in J_{\lambda}(x)})^\perp $ with $0<\|z\|<1-r\|x\|$ and $\|z\|\leq \min_{j\in (J_{\lambda}(x))^{c}}(|\langle rx, x_j\rangle|-\lambda\ )\|x_j\|^{-1}$. 
Let  $j\in (J_{\lambda}(x))^{c}$ with  $\langle x, x_j\rangle \ge \lambda\ $.  We have that 
 \begin{align*}
 \langle rx+z, x_j\rangle&= \langle rx, x_j\rangle+\langle z, x_j\rangle\\
 &\ge \langle rx, x_j\rangle-|\langle z, x_j\rangle|\\
 &\ge \langle rx, x_j\rangle -\|z\| \|x_j\|\\
 &\ge \langle rx, x_j\rangle- (|\langle rx, x_j\rangle|-\lambda\ )\\
 &= \lambda\ .
 \end{align*}
 Thus, we have that $\phi_\lambda \langle rx+z, x_j\rangle=\phi_\lambda \langle rx, x_j\rangle = \lambda$ for $j\in (J_{\lambda}(x))^{c}$ with  $\langle x, x_j\rangle \ge \lambda\ $.  Likewise, we have that $\phi_\lambda \langle rx+z, x_j\rangle=\phi_\lambda \langle rx, x_j\rangle = -\lambda$ for $j\in (J_{\lambda}(x))^{c}$ with $\langle x, x_j\rangle \leq -\lambda\ $.    For $j\in J_{\lambda}(x)$ we have that $\langle z, x_j\rangle=0$ and hence $\langle rx+z, x_j\rangle=\langle rx, x_j\rangle$.  Thus,  $\phi_\lambda \langle rx+z, x_j\rangle=\phi_\lambda \langle rx, x_j\rangle$ for all ${j\in J}$.  Hence $(x_j)_{j\in J}$ does not do $\lambda$-saturation recovery as $\|rx+z\|<1$ and $\|rx\|<1$.

We now prove $\ref{T:1c}\Rightarrow \ref{T:1b}$.  We assume \ref{T:1c} holds and let $x\in B_H$ with $\|x\|<1$ and $x\neq 0$.  We have that $(x_j)_{j\in J_{\lambda}(\|x\|^{-1} x)}$ is a frame of $H$ and $J_{\lambda}(\|x\|^{-1} x)\subseteq J_{\lambda}^{\sharp}(x)$.  Thus, $(x_j)_{j\in J_{\lambda}^{\sharp}(x)}$ is a frame of $H$.

Lastly, we prove $\ref{T:1b} \Rightarrow \ref{T:1a}$.  We assume  that $ (x_{j})_{j\in J_{\lambda}^{\sharp}(x)} $ is frame of $ H $ for all $x\in H$ with $\|x\|<1$. 
Now let $ x, y \in H$ with $\|x\|\leq1$ and $\|y\|\leq 1$ such that  $ \Phi_\lambda T(x)=\Phi_\lambda T(y)$. 

We first consider the case that either $\|x\|< 1$ or $ \|y\| < 1$.  Without loss of generality, we assume that $\|x\|<1$.  Thus, $(x_{j})_{j\in J_{\lambda}^{\sharp}(x)} $ is frame of $ H $ and $ \phi_\lambda  \langle x , x_j \rangle = \phi_\lambda \langle y , x_j \rangle $  for all $j \in J $.  Thus, $\langle x , x_j \rangle =\langle y , x_j \rangle$ for all $j\in J_{\lambda}^{\sharp}(x)$ and hence  $ x = y $ as $(x_j)_{j\in J_{\lambda}^{\sharp}(x)}$ is a frame of $ H $.   
 
 We now consider the case that $\|x\|=\|y\|=1$ and $x\neq y$.  As $x\neq y$, we have that $\|(x+y)/2\|<1$.  But, $\Phi_\lambda T((x+y)/2)=\Phi_\lambda Tx=\Phi_\lambda Ty$.  Indeed, if $j\in J$ is such that $\langle x, x_j\rangle\geq \lambda$ then $\langle y,x_j\rangle\geq\lambda$ and hence
 $$\langle(x+y)/2,x_j\rangle=\frac{1}{2}(\langle x,x_j\rangle+\langle y, x_j\rangle)\geq\frac{1}{2}(\lambda+\lambda)=\lambda.
 $$
 Likewise, if $j\in J$ is such that $\langle x,x_j\rangle\leq-\lambda$ then $\langle(x+y)/2,x_j\rangle\leq -\lambda$.  If $j\in J_{\lambda}^{\sharp}(x)=J_{\lambda}^{\sharp}(y)$ then $\langle x,x_j\rangle=\langle y,x_j\rangle$ and hence $\langle x,x_j\rangle= \langle(x+y)/2,x_j\rangle$.  This proves that $\Phi_\lambda T((x+y)/2)=\Phi_\lambda Tx=\Phi_\lambda T_y$.  As $\|(x+y)/2\|<1$ we have reduced the problem to the previous case and hence $(x+y)/2=x=y$, a contradiction of the assumption that $x\neq y$.
\end{proof}

Let $(x_j)_{j\in J}$ be a finite frame for a real Hilbert space $H$.  Note that if $\lambda\geq \|x_j\|$ for all $j\in J$ then $(x_j)_{j\in J}$ will trivially do $\lambda$-saturation recovery on $B_H$ as none of the frame coefficients for $x\in B_H$ will be saturated.  On the other hand, $(x_j)_{j\in J}$ will not do $\lambda$-saturation recovery on $B_H$ if $\lambda$ is very close to $0$.   We denote $\lambda_c$ to be the infimum of all $\lambda>0$ so that $(x_j)_{j\in J}$ does $\lambda$-saturation recovery on $B_H$.  We will refer to $\lambda_c$ as the {\em critical saturation level}.  For example, if $(x_j)_{j=1}^n$ is an orthonormal basis of $\R^n$ then clearly $\lambda_c=1$, and we will show in Corollary \ref{C:MB} that if $(x_j)_{j=1}^{n+1}\subseteq S_{\R^n}$ is an equiangular  frame of $\R^n$ then $\lambda_c=2^{-1/2}(1+n^{-1})^{1/2}$.

The following theorem shows that when  $(x_j)_{j\in J}$ is a finite frame, we have $\lambda_c>0$ and $(x_j)_{j\in J}$ does $\lambda_c$-saturation recovery on $B_H$. 

\begin{thm}\label{T:no}
Let $(x_j)_{j\in J}$ be a finite frame of a finite dimensional Hilbert space $H$. The following all hold.
\begin{enumerate}[label=\textup{(\alph*)}]
\item $(x_j)_{j\in J}$ does $\lambda_c$-saturation recovery. \label{T:noa}
\item If $\lambda>\lambda_c$ then for all $x\in H$ with $\|x\|\leq 1$ we have that $(x_j)_{j\in J_{\lambda}^{\sharp}(x)}$ is a frame of $H$. \label{T:nob}
\item There exists $x\in H$ with $\|x\|= 1$ so that $(x_j)_{j\in J_{\lambda_c}^{\sharp}(x)}$ is not a frame of $H$. \label{T:noc}
\item There are only finitely many $x\in H$ with $\|x\|\leq 1$ so that $(x_j)_{j\in J_{\lambda_c}^{\sharp}(x)}$ is not a frame of $H$. \label{T:nod}
\end{enumerate}
\end{thm}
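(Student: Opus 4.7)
The plan is to prove \ref{T:nob}, \ref{T:noa}, \ref{T:noc}, \ref{T:nod} in order, since each part builds on the previous. For \ref{T:nob}, fix $\lambda > \lambda_c$ and $x$ with $\|x\| \leq 1$. By the infimum definition of $\lambda_c$, there exists $\lambda' \in (\lambda_c, \lambda)$ for which $(x_j)_{j\in J}$ does $\lambda'$-saturation recovery on $B_H$. Theorem~\ref{T:1}\,\ref{T:1c} then gives that $(x_j)_{j \in J_{\lambda'}(x)}$ is a frame of $H$, and since $\lambda' < \lambda$ forces $J_{\lambda'}(x) \subseteq J_\lambda^\sharp(x)$, the larger family $(x_j)_{j \in J_\lambda^\sharp(x)}$ is also a frame. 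Part \ref{T:noa} follows via Theorem~\ref{T:1}\,\ref{T:1a}$\Leftrightarrow$\ref{T:1c}: for a fixed $x \in B_H$, the finiteness of $J$ lets me pick $\epsilon > 0$ so small that no value of $|\langle x, x_j\rangle|$ lies in $(\lambda_c, \lambda_c + \epsilon)$, which forces $J_{\lambda_c + \epsilon}^\sharp(x) = J_{\lambda_c}(x)$; then \ref{T:nob} applied at $\lambda_c + \epsilon$ shows that $(x_j)_{j \in J_{\lambda_c}(x)}$ is a frame.

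For \ref{T:noc} I would argue by contradiction. If $(x_j)_{j \in J_{\lambda_c}^\sharp(x)}$ were a frame for every $x \in S_H$, then combined with \ref{T:noa} and Theorem~\ref{T:1}\,\ref{T:1a}$\Rightarrow$\ref{T:1b} the conclusion extends to all $x \in B_H$. For each such $x$, the quantity $\mu_x = \max\{|\langle x, x_j\rangle| : j \in J_{\lambda_c}^\sharp(x)\}$ is strictly less than $\lambda_c$; picking any $\lambda_x \in (\mu_x, \lambda_c)$ and using continuity of each $\langle \cdot, x_j\rangle$, there is an open neighborhood $U_x \subseteq B_H$ on which $J_{\lambda_c}^\sharp(x) \subseteq J_{\lambda_x}(y)$, making $(x_j)_{j \in J_{\lambda_x}(y)}$ a frame. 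A finite subcover $U_{x_1}, \ldots, U_{x_m}$ of the compact set $B_H$ together with $\lambda^* = \max_i \lambda_{x_i} < \lambda_c$ yields that $(x_j)_{j \in J_{\lambda^*}(y)}$ is a frame for every $y \in B_H$; Theorem~\ref{T:1}\,\ref{T:1c}$\Rightarrow$\ref{T:1a} then produces $\lambda^*$-saturation recovery on $B_H$, contradicting $\lambda^* < \lambda_c$.

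For \ref{T:nod}, suppose for contradiction that the set $B = \{x \in B_H : (x_j)_{j \in J_{\lambda_c}^\sharp(x)} \text{ is not a frame of } H\}$ is infinite. Part \ref{T:noa} together with Theorem~\ref{T:1}\,\ref{T:1a}$\Rightarrow$\ref{T:1b} forces $B \subseteq S_H$. Compactness of $S_H$ and finiteness of the collection of subsets of $J$ yield $x^* \in S_H$ together with a sequence $x_n \to x^*$ in $B$ with $x_n \neq x^*$ and $J_{\lambda_c}^\sharp(x_n) = S$ constant, so that $(x_j)_{j \in S}$ is not a frame; continuity gives $J_{\lambda_c}^\sharp(x^*) \subseteq S$, so $x^* \in B$ as well. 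Passing to a further subsequence, $w_n := (x_n - x^*)/\|x_n - x^*\|$ converges to a unit vector $w$, and the normalization $\|x_n\| = \|x^*\| = 1$ forces $\langle w, x^*\rangle = 0$. The main obstacle is to combine this tangent direction $w$ with a nonzero vector $v \in \mathrm{span}(x_j)_{j \in S}^\perp$ (nontrivial since $(x_j)_{j \in S}$ fails to span $H$) to produce a point $y \in B_H$ with $y \neq x^*$ and $\Phi_{\lambda_c}\Theta(y) = \Phi_{\lambda_c}\Theta(x^*)$, contradicting \ref{T:noa}. The delicate step is matching the signs on the saturation-boundary coordinates $T(x^*) = \{j \in J : |\langle x^*, x_j\rangle| = \lambda_c\}$, using the one-sided inequalities $\mathrm{sign}(\langle x^*, x_j\rangle)\langle w, x_j\rangle \leq 0$ for $j \in T(x^*) \cap S$ and $\mathrm{sign}(\langle x^*, x_j\rangle)\langle w, x_j\rangle \geq 0$ for $j \in T(x^*) \setminus S$ that are inherited in the limit from the constraints $|\langle x_n, x_j\rangle| < \lambda_c$ or $|\langle x_n, x_j\rangle| \geq \lambda_c$ enforced by the sequence.
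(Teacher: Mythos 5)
Your treatment of parts \ref{T:noa}--\ref{T:noc} is correct and is essentially the paper's route: \ref{T:nob} and \ref{T:noa} both come down to the observation that, since $J$ is finite, $J_{\lambda_c}(x)=J_{\lambda}(x)$ (equivalently $=J_{\lambda_c+\epsilon}^{\sharp}(x)$) for some $\lambda>\lambda_c$, combined with Theorem \ref{T:1}; and your finite-subcover argument for \ref{T:noc} is the same compactness idea the paper packages as ``the good set $H_{\lambda_c}$ is open, so if it contained $B_H$ it would contain $\alpha B_H$ for some $\alpha>1$, and rescaling would beat $\lambda_c$.'' (One small point in \ref{T:nob}: the infimum property only hands you a good $\lambda'$ in $[\lambda_c,\lambda)$; you should note that the set of levels at which saturation recovery holds is upward closed, via $J_{\lambda'}(x)\subseteq J_{\lambda''}(x)$ for $\lambda'\leq\lambda''$ and Theorem \ref{T:1}, so that any $\lambda'\in(\lambda_c,\lambda)$ works.)

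Part \ref{T:nod} is where you have a genuine gap, and you flag it yourself: after extracting $x_n\to x^*$ with constant $S=J_{\lambda_c}^{\sharp}(x_n)$ and the tangent direction $w$, the construction of a second point $y\neq x^*$ with $\Phi_{\lambda_c}\Theta(y)=\Phi_{\lambda_c}\Theta(x^*)$ is exactly the hard part, and the sign-matching on the boundary set $\{j:|\langle x^*,x_j\rangle|=\lambda_c\}$ is not routine (you only get one-sided limits of non-strict inequalities, which is not enough to keep a perturbation $x^*+tw+sv$ inside the closed ball \emph{and} preserve all saturation signs simultaneously). The paper avoids all of this with a pigeonhole-plus-midpoint argument that you are one step away from: assign to each bad $x$ the sign pattern $(\vp_j)\in\{-1,0,1\}^J$ with $\vp_j=\mathrm{sign}(\langle x,x_j\rangle)$ for $j\notin J_{\lambda_c}^{\sharp}(x)$ and $\vp_j=0$ otherwise. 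If two distinct bad vectors $x\neq y$ in $B_H$ share a pattern, then $\|\tfrac{1}{2}(x+y)\|<1$ while $J_{\lambda_c}^{\sharp}(\tfrac12(x+y))=S$ (on saturated coordinates the two inner products have the same sign and magnitude $\geq\lambda_c$, so their average does too; on coordinates in $S$ the average stays strictly below $\lambda_c$), so $(x_j)_{j\in J_{\lambda_c}^{\sharp}(\frac12(x+y))}$ fails to be a frame at a point of the open ball, contradicting \ref{T:noa} via Theorem \ref{T:1}\ref{T:1b}. Hence there are at most $3^{|J|}$ bad vectors. Since you already invoked finiteness of the subsets of $J$ to stabilize $S$ along your sequence, refining to full sign patterns and averaging two terms of the sequence closes your argument much more cheaply than the tangent-vector construction.
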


\begin{proof}
 We first prove that $(x_j)_{j\in J}$ does $\lambda_c$-saturation recovery by proving that $(x_j)_{j\in J_{\lambda}(x)}$ is a frame for all $x\in B_H$.  Let $x\in H$ with $\|x\|\leq 1$.  We have that
$$ J_{\lambda_c}(x) = \bigcap_{\lambda>\lambda_c} J_{\lambda}(x).$$
As this is an intersection of finite sets, we have that $J_{\lambda_c}(x)=J_{\lambda}(x)$ for some $\lambda>\lambda_c$.  As $(x_j)_{j\in J}$ does $\lambda$-saturation recovery we have that $(x_j)_{j\in J_{\lambda}(x)}$ is a frame of $H$ and hence  $(x_j)_{j\in J_{\lambda_c}(x)}$ is a frame of $H$.  This proves that $(x_j)_{j\in J}$ does $\lambda_c$-saturation recovery which proves (a).

We have for all $\lambda>\lambda_c$ that $J_{\lambda}^{\sharp}(x)\supseteq J_{\lambda_c}(x)$. By (a), for all $x\in H$ with $\|x\|\leq 1$ we have that $(x_j)_{j\in J_{\lambda_c}(x)}$ is a frame of $H$. Hence $(x_j)_{j\in J_{\lambda}^{\sharp}(x)}$ is a frame of $H$ as well.  This proves (b).

 We denote $H_{\lambda_c}$ to be the set of all $x\in H$ such that $(x_j)_{j\in J_{\lambda_c}^{\sharp}(x)}$ is a frame of $H$.  As $(x_j)_{j\in J}$ does $\lambda_c$-saturation recovery we have by Theorem \ref{T:1}\ref{T:1b} that the open unit ball of $H$ is contained in $H_{\lambda_c}$.  For the sake of contradiction, we assume that $B_H\subseteq H_{\lambda_c}$.  Recall that $B_H$ represents the closed unit ball of $H$.  As $H_{\lambda_c}$ is an open set and $B_H$ is compact there exists $\alpha>1$ so that $\alpha B_H\subseteq H_{\lambda_c}$.  Thus, $B_H\subseteq H_{\lambda_c\alpha^{-1}}$ and $(x_j)_{j\in J}$ does $\lambda_c\alpha^{-1}$-saturation recovery.  This contradicts the fact that $\lambda_c$ is the infimum of all $\lambda>0$ so that $(x_j)_{j\in J}$ does $\lambda$-saturation recovery. Hence, $B_H\not\subseteq H_{\lambda_c}$ and there exists $x\in H$ with $\|x\|=1$ so that $(x_j)_{j\in J_{\lambda_c}^{\sharp}(x)}$ is not a frame of $H$, proving (c).

 We claim that for all  $(\vp_j)_{j\in J}\in\{-1,0,1\}^J$ there exists at most one $x\in B_H$ so that $(x_j)_{j\in J_{\lambda_c}^{\sharp}(x)}$ is not a frame of $H$, $\vp_j=\mathrm{sign}(\langle x,x_j\rangle)$ for all $j\in (J_{\lambda_c}^{\sharp}(x))^{c}$, and $\vp_j=0$ for all $j\in J_{\lambda_c}^{\sharp}(x)$.  Assuming the claim, we have that there exists at most $3^{|J|}$ vectors $x\in B_H$ so that $(x_j)_{j\in J_{\lambda_c}^{\sharp}(x)}$ is not a frame of $H$, which proves (d).  
 We now prove that the claim holds.
 Let  $x\in B_H$ so that $(x_j)_{j\in J_{\lambda_c}^{\sharp}(x)}$ is not a frame of $H$.  For the sake of contradiction, we assume that there exists $y\in B_H$ with $y\neq x$, $J_{\lambda_c}^{\sharp}(y)=J_{\lambda_c}^{\sharp}(x)$, and $\mathrm{sign}(\langle y,x_j\rangle)=\mathrm{sign}(\langle x,x_j\rangle)$ for all $j\in (J_{\lambda_c}^{\sharp}(x))^{c}$.  We have that $\|\frac{1}{2}(x+y)\|<1$ and $J_{\lambda_c}^{\sharp}(\frac{1}{2}(x+y))=J_{\lambda_c}^{\sharp}(y)=J_{\lambda_c}^{\sharp}(x)$.  Thus, $(x_j)_{j\in J_{\lambda_c}^{\sharp}(\frac{1}{2}(x+y))}$ is not a frame of $H$.  As $\frac{1}{2}(x+y)$ is contained in the open unit ball of $H$, this contradicts that  $(x_j)_{j\in J}$ does $\lambda_c$-saturation recovery and proves our claim.  Hence, there exists at most $3^{|J|}$ vectors $x\in B_H$ so that $(x_j)_{j\in J_{\lambda_c}^{\sharp}(x)}$ is not a frame of $H$.
\end{proof}

\section{Minimal structure for saturation recovery}\label{S:Min}

Both in phase retrieval and saturation recovery, it is necessary to use a redundant frame rather than a basis.  The redundancy of a frame is necessary to perform these recovery operations, but it is important to know just how much redundancy is needed.  That is, given a fixed Hilbert space $H$, how many vectors should be included in the frame for it to do phase retrieval or $\lambda$-saturation recovery on $B_H$?
For real phase retrieval, it is known that a generic frame of $2n-1$ vectors does phase retrieval for $\R^n$ and that no frame of $2n-2$ vectors can do phase retrieval for $\R^n$ \cite{BCE06}.  For complex phase retrieval, it is known that a generic frame of $4n-4$ vectors does phase retrieval for $\C^n$ for $n\geq2$ and that this is optimal for certain dimensions of the form $n=2^k+1$ \cite{CEHV15}.  However, Cynthia Vinzant constructed a frame of 11 vectors which does phase retrieval for $\C^4$ \cite{V15}, which earned a prize of one coke from Dustin Mixon \cite{M13}.  The problem of determining the minimal number of vectors required to do phase retrieval in $\C^n$ remains open in general.  
The number of unit vectors required to do $\lambda$-saturation recovery on the ball $B_{\R^n}$ depends on both the value $\lambda>0$ and the dimension $n\in\N$.  This naturally leads to the following problems.

\begin{prob}\label{Q:1}
 Let $n\in\N$ and $\lambda>0$.  What is the smallest $m\geq n$ so that there exists a frame of $m$ unit vectors which does $\lambda$-saturation recovery on $B_{\R^n}$?
\end{prob}
\begin{prob}\label{Q:2}
Let $m\geq n$.  What is the smallest $\lambda>0$ so that there exists a frame of $m$ unit vectors which does $\lambda$-saturation recovery on $B_{\R^n}$?
\end{prob}

 Note that by Theorem \ref{T:1}, we have that a frame $(x_j)_{j\in J}$ does $\lambda$-saturation recovery on $B_{\R^n}$ if and only if $(x_j)_{j\in J_{\lambda}(x)}$ is a frame of $\R^n$ for all $x\in B_{\R^n}$.  We will show that this naturally leads to a multiple packing problem in real projective space.  

Let $(X,d)$ be a metric space.  For $x_0\in X$ and $\vp>0$ we denote $B^o_\vp(x_0)=\{x\in X:d(x,x_0)<\vp\}$ to be the open ball of radius $\vp$ centered at $x_0$.  A {\em packing} in $X$ is a collection of disjoint balls $(B^o_\vp(x_j))_{j=1}^m$.  
Given $L\in\N$, we call $(B^o_\vp(x_j))_{j=1}^m$ an {\em $L$-fold multi packing} or {\em $L$-fold multiple packing} if for all $x\in X$ we have that $d(x,x_j)>\vp$ for at most $L$ values of $j\in{1,...,m}$.  Note that $(B^o_\vp(x_j))_{j=1}^m$ is a packing in $X$ if and only if it is a $1$-multi-packing.  When working in Hamming space, packings correspond to error correcting codes as if a code is transmitted with less errors than the radius of the packing then the closest point in the packing will be unchanged \cite{MS77}.  In this setting, multi-packings correspond to list decodable codes where if a code is transmitted with less errors than the radius of the multi-packing, it will be possible to choose a code from a small list of acceptable values \cite{E91}.

We will specifically be considering packings and multi-packings in real projective space.
For $n\geq2$, we consider the $(n-1)$-dimensional projective space, $P^{n-1}$, to be the set of all lines in $\R^n$ which contain the origin.  The distance $d(L_1,L_2)$ between two lines $L_1,L_2\in P^{n-1}$ is defined as the angle $\theta\in[0,\pi/2]$ between the lines.  This definition is chosen as it is consistent with the induced Riemannian metric on $P^{n-1}$, although this will not be necessary for our purposes. For $x\in \R^n\setminus\{0\}$, we denote the span of $x$ by $[x]\in P^{n-1}$.  It follows that if $x,y\in S_{\R^n}$ are unit vectors then $d([x],[y])=\cos^{-1}(|\langle x,y\rangle|)$.

Given a collection of $m$ unit vectors $(x_j)_{j=1}^m$ in $\R^n$, the {\em Welch bound}, \cite{W74}, states that
\begin{equation}\label{E:Welch}
    \max_{j\neq k}|\langle x_j,x_k\rangle|^2\geq \frac{m-n}{n(m-1)}.
\end{equation}
Thus, \eqref{E:Welch} can be used to give an upper bound on the possible $\vp>0$ so that there exists a packing in $P^{n-1}$ of balls of radius $\vp$.  In \cite{SH03} it is proven that a collection of unit vectors $(x_j)_{j=1}^m\subseteq S_{\R^n}$ satisfies equality in \eqref{E:Welch} if and only if $(x_j)_{j=1}^m$ is an equiangular tight frame.  Here, $(x_j)_{j=1}^m\subseteq S_{\R^n}$ is called {\em equiangular} if there is a constant $\alpha$ so that $|\langle x_i,x_j\rangle|=\alpha$ for all $i\neq j$.
Thus, equiangular tight frames provide optimal packings in projective space, but the converse does not hold as there does not exist an equiangular tight frame for $\R^n$ of $m$ vectors for every $m\geq n$.  It is a major open problem in frame theory to determine for which values $m\geq n$ there exists an equiangular tight frame of $m$ vectors for $\R^n$.  However, it is fairly simple to construct an equiangular tight frame of $n+1$ vectors for $\R^n$. Indeed, 
we denote $(e_j)_{j=1}^{n+1}$ to be the unit vector basis for $\R^{n+1}$ and denote $P_{(\mathrm{span}(\sum e_j))^\perp}$ to be the orthogonal projection onto the orthogonal complement of $\sum_{j=1}^{n+1} e_j$.  We let $x_j=P_{(\mathrm{span}(\sum e_j))^\perp} e_j$, and then by symmetry we have that $(x_j/\|x_j\|)_{j=1}^{n+1}$ is an equiangular tight frame for $(\mathrm{span}(\sum e_j))^\perp$ consisting of $n+1$ unit vectors.

%There are many other strong connections between frame theory and packing problems in real projective space and their overlap remains an exciting and fertile area of research \cite{FJM18,FJM20,FJMP21}. We now provide a new connection between $\lambda$-saturation recovery and multi-packings in real projective space.  It is surprising that this appears to be the first use of a multi-packing problem in frame theory, and we hope that it inspires new research on the subject.  

We now show that constructing frames of unit vectors which do $\lambda$-saturation recovery on $B_{\R^n}$ is equivalent to a multi-packing problem in real projective space.

\begin{prop}\label{P:pack}
Let $(x_j)_{j=1}^m$ be a full spark frame of unit vectors for $\R^n$ and let $0<\lambda<1$.  Then $(x_j)_{j=1}^m$ does $\lambda$-saturation recovery on $B_{\R^n}$ if and only if $(B^o_\vp([x_j]))_{j=1}^m$ is a $(m-n)$-fold multi packing in $P^{n-1}$ for $\vp=\cos^{-1}(\lambda)$.
\end{prop}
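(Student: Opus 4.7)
The plan is to invoke Theorem \ref{T:1} to rephrase $\lambda$-saturation recovery as a spanning condition on the sub-family indexed by $J_\lambda(x)$, use the full-spark hypothesis to turn ``spans'' into the cardinality bound $|J_\lambda(x)^c|\le m-n$, and finally translate the inner-product threshold $|\langle u,x_j\rangle|>\lambda$ into the projective ball condition $[u]\in B^o_\vp([x_j])$ via $\vp=\cos^{-1}(\lambda)$.

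First I would apply Theorem \ref{T:1}\ref{T:1c}: $(x_j)_{j=1}^m$ does $\lambda$-saturation recovery on $B_{\R^n}$ if and only if $(x_j)_{j\in J_\lambda(x)}$ is a frame of $\R^n$ for every $x\in B_{\R^n}$. Because $(x_j)_{j=1}^m$ is full spark, any $n$ of these vectors are linearly independent and hence form a basis of $\R^n$, so $(x_j)_{j\in J_\lambda(x)}$ is a frame exactly when $|J_\lambda(x)|\ge n$, equivalently $|J_\lambda(x)^c|=|\{j:|\langle x,x_j\rangle|>\lambda\}|\le m-n$. Thus the saturation recovery condition reduces to the statement that for every $x\in B_{\R^n}$ at most $m-n$ coordinates of $x$ are saturated.

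Next I would reduce this uniform bound over $B_{\R^n}$ to a uniform bound over the unit sphere. If $x\in B_{\R^n}\setminus\{0\}$ and $u=x/\|x\|$, then $\|x\|\le 1$ gives $|\langle x,x_j\rangle|=\|x\|\,|\langle u,x_j\rangle|\le|\langle u,x_j\rangle|$ for every $j$, so $J_\lambda(x)^c\subseteq J_\lambda(u)^c$ (the case $x=0$ is vacuous). Hence ``$|J_\lambda(x)^c|\le m-n$ for all $x\in B_{\R^n}$'' is equivalent to the same bound restricted to unit $u$. For such a $u$, the identity $d([u],[x_j])=\cos^{-1}(|\langle u,x_j\rangle|)$ shows that $|\langle u,x_j\rangle|>\cos\vp$ iff $d([u],[x_j])<\vp$ iff $[u]\in B^o_\vp([x_j])$. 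Since every point of $P^{n-1}$ has a unit-vector representative, the saturation recovery condition transcribes precisely into ``every $[u]\in P^{n-1}$ lies in at most $m-n$ of the balls $B^o_\vp([x_j])$,'' which is the definition of $(B^o_\vp([x_j]))_{j=1}^m$ being an $(m-n)$-fold multi-packing.

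The argument is short, and the only real subtlety is the scaling reduction: one must notice that replacing $x$ by $x/\|x\|$ can only enlarge the saturated set $J_\lambda(\cdot)^c$, so that the worst case is realized on $S_{\R^n}$. Once that observation is in hand, the proof is essentially a change of language, with the full-spark hypothesis supplying the missing bridge from the spanning condition of Theorem \ref{T:1} to a pure cardinality count.
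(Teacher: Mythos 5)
Your proof is correct and follows essentially the same route as the paper's: invoke Theorem \ref{T:1}, use full spark to convert the frame condition into the cardinality bound $|J_{\lambda}(x)^{c}|\leq m-n$, and translate $|\langle u,x_j\rangle|>\lambda$ into $d([u],[x_j])<\cos^{-1}(\lambda)$. If anything you are slightly more careful than the paper, which passes directly to $x\in S_{\R^n}$ without recording your observation that $J_{\lambda}(x)^{c}\subseteq J_{\lambda}(x/\|x\|)^{c}$, the step that justifies reducing the condition on the closed ball to one on the sphere.
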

\begin{proof}
    By Theorem \ref{T:1} we have that $(x_j)_{j=1}^m$ does $\lambda$-saturation recovery on $B_{\R^n}$ if and only if $(x_j)_{j\in J_{\lambda}(x)}$ is a frame of $\R^n$ for all $x\in B_{\R^n}$.  As $(x_j)_{j=1}^m$ is full spark, we have for each $x\in B_X$ that $(x_j)_{j\in J_{\lambda}(x)}$ is a frame of $\R^n$ if and only if $|J_{\lambda}(x)|\geq n$.  For $x\in S_{\R^n}$, we have that $j\in J_{\lambda}(x)$ if and only if 
the distance in the projective plane between $[x]$ and $[x_j]$ satisfies 
$$d([x],[x_j])=\cos^{-1}(|\langle x,x_j\rangle|)\geq\cos^{-1}(\lambda).
 $$
Thus, for $\vp=\cos^{-1}(\lambda)$ we have for each $x\in S_{\R^n}$ that $| J_{\lambda}(x)|\geq n$ if and only if $d([x],[x_j])< \vp$ for at most $m-n$ values of $j$.  This gives that $(x_j)_{j=1}^m$ does $\lambda$-saturation recovery on $B_{\R^n}$ if and only if $(B_\vp([x_j]))_{j=1}^m$ is a $(m-n)$-multi cover of $P^{n-1}$. 
\end{proof}

This motivates the following problems which generalize Problems \ref{Q:1} and \ref{Q:2} for the case that $L=m-n$ and $(x_j)_{j=1}^m$ is a full spark frame.

\begin{prob}\label{Q:4} 
We pose the following multi-packing problems in projective space.
\begin{enumerate}[label=\textup{(\arabic*)}]
    \item Let $m,n,L\in\N$, what is the greatest $\vp>0$ so that there exists an $L$-fold multi packing $(B_\vp([x_j]))_{j=1}^m$ in $P^{n-1}$?
    \item Let $m,n\in\N$ and $\vp>0$, what is the least $L\in\N$ so that there exists an $L$-fold multi packing $(B_\vp([x_j]))_{j=1}^m$ in $P^{n-1}$?
    \item Let $m,L\in\N$ and $\vp>0$, what is the least $n\in\N$ so that there exists an $L$-fold multi packing $(B_\vp([x_j]))_{j=1}^m$ in $P^{n-1}$?
    \item Let $n,L\in\N$ and $\vp>0$, what is the greatest $m\in\N$ so that there exists an $L$-fold multi packing $(B_\vp([x_j]))_{j=1}^m$ in $P^{n-1}$?
\end{enumerate}
\end{prob}

Note that Proposition \ref{P:pack} only applies to frames of unit vectors.  In general, a full spark frame $(x_j)_{j=1}^m\subseteq \R^n$ does $\lambda$-saturation recovery on $B_{\R^n}$ if and only if for all $x\in S_{\R^n}$ at most $m-n$ of the frame coefficients have magnitude greater than $\lambda$.  Thus, the critical value for $(x_j)_{j=1}^{m}$ to do $\lambda_c$-saturation recovery on $B_{\R^n}$ is given by
\begin{equation}
    \lambda_c=\max_{x\in S_{\R^n}}\;\max_{1 \leq k_0 < \cdots < k_{m-n} \leq m}\;\min_{0 \leq j \leq m-n} |\langle x, x_{k_j}  \rangle |.
\end{equation}

A metric space $(M,d)$ is said to satisfy the {\em midpoint property} if for every $x,y\in M$ there exists $z\in M$ with $d(x,z)=d(y,z)=d(x,y)/2$.  It follows by a simple triangle inequality argument that if $(M,d)$ satisfies the midpoint property then the greatest $\vp>0$ such that $(B^o_{\vp}(x_j))_{j=1}^m$ is a packing in $M$ is given by $\vp=\min_{i\neq j}d(x_i,x_j)/2$.  Note that $P^{n-1}$ satisfies the midpoint property as when $L_1,L_2\in P^{n-1}$, the midpoint of $L_1$ and $L_2$ is the line bisecting the acute angle between $L_1$ and $L_2$.  Thus,  if $(x_j)_{j=1}^m\subseteq S_{\R^n}$ then the maximal $\vp>0$ such that $(B^o_{\vp}([x_j]))_{j=1}^m$ is a packing in $P^{n-1}$ is exactly $\vp=\max_{j\neq i}\frac{1}{2}\cos^{-1}(|\langle x_i,x_j\rangle|)$.    This gives the following corollary which solves Problem \ref{Q:2} for the case $m=n+1$, and solves Problem \ref{Q:1} for $2^{-1/2}(1+1/n)^{1/2}\leq\lambda<1$.

\begin{cor}\label{C:MB}
    Let $(x_j)_{j=1}^{n+1}\subseteq S_{\R^n}$ and let $\alpha=\max_{i\neq j}|\langle x_i,x_j\rangle|$.  If $(x_j)_{j=1}^{n+1}$ is full spark then the critical value for   $(x_j)_{j=1}^{n+1}$ to do $\lambda_c$-saturation recovery on $B_{\R^n}$ is given by $\lambda_c=2^{-1/2}(1+\alpha)^{1/2}$. 
    Furthermore, $\lambda_c\geq 2^{-1/2}(1+1/n)^{1/2}$ and equality is achieved if and only if  $(x_j)_{j=1}^{n+1}$ is an equiangular frame.

    If $(x_j)_{j=1}^{n+1}$ is not full spark then $\lambda_c=1$ is the critical value for   $(x_j)_{j=1}^{n+1}$ to do $\lambda_c$-saturation recovery on $B_{\R^n}$.
\end{cor}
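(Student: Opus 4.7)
The plan is to specialize Proposition \ref{P:pack} to $m=n+1$ (where $m-n=1$ reduces a $(m-n)$-fold multi-packing to an ordinary packing), combine it with the midpoint-property formula for the maximal packing radius in $P^{n-1}$ recorded just before the statement, and then invoke the Welch bound for the sharpness claim. For the full spark case, I would apply Proposition \ref{P:pack} to rewrite ``$(x_j)_{j=1}^{n+1}$ does $\lambda$-saturation recovery on $B_{\R^n}$'' as ``$(B^o_{\cos^{-1}(\lambda)}([x_j]))_{j=1}^{n+1}$ is a packing in $P^{n-1}$''. By the midpoint property, this is equivalent to $\cos^{-1}(\lambda)\leq \tfrac{1}{2}\min_{i\neq j} d([x_i],[x_j]) = \tfrac{1}{2}\cos^{-1}(\alpha)$, and the half-angle identity $\cos(\theta/2)=\sqrt{(1+\cos\theta)/2}$ converts this to $\lambda\geq 2^{-1/2}(1+\alpha)^{1/2}$. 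The infimum is attained at equality (open balls of radius exactly half the minimal pairwise distance are still disjoint by the triangle inequality; one may also cite Theorem \ref{T:no}\ref{T:noa}), yielding $\lambda_c=2^{-1/2}(1+\alpha)^{1/2}$.

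For the Welch-bound sharpness, plugging $m=n+1$ into \eqref{E:Welch} gives $\alpha^2\geq 1/n^2$, so $\alpha\geq 1/n$, which by monotonicity of $\alpha\mapsto 2^{-1/2}(1+\alpha)^{1/2}$ yields $\lambda_c\geq 2^{-1/2}(1+1/n)^{1/2}$. Equality in \eqref{E:Welch} characterizes equiangular tight frames; for $n+1$ unit vectors in $\R^n$ any equiangular configuration is automatically an ETF (the regular simplex explicitly constructed in the paragraph preceding the corollary is, up to orthogonal transformation and sign adjustments, the unique such configuration), so the equality case in the lower bound on $\lambda_c$ is precisely the equiangular case.

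For the non-full-spark case, I would choose an $n$-element subset $I\subseteq\{1,\ldots,n+1\}$ whose span is a proper subspace of $\R^n$, let $k$ denote the remaining index, and set $x=x_k$. Then $|\langle x_k,x_k\rangle|=1>\lambda$ for every $\lambda<1$, so $k\notin J_\lambda(x_k)$ and therefore $J_\lambda(x_k)\subseteq I$; since $(x_j)_{j\in I}$ does not span $\R^n$, condition \ref{T:1c} of Theorem \ref{T:1} fails at $x_k$ and $(x_j)_{j=1}^{n+1}$ does not do $\lambda$-saturation recovery for any $\lambda<1$. At $\lambda=1$, every $x\in B_{\R^n}$ satisfies $|\langle x,x_j\rangle|\leq 1$, so $J_1(x)=\{1,\ldots,n+1\}$ is always the full index set and recovery holds trivially. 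Hence $\lambda_c=1$. The only subtle points in the overall argument are bookkeeping ones: aligning the weak-inequality edge of the packing definition with the infimum defining $\lambda_c$, and confirming that equiangularity forces tightness when $m=n+1$; both are routine given the material already in place.
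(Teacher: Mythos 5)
Your route—Proposition \ref{P:pack} with $m-n=1$, the midpoint-property formula for the maximal packing radius, the half-angle identity, the Welch bound \eqref{E:Welch}, and the observation that in the non-full-spark case $J_{\lambda}(x_k)$ is contained in a non-spanning $n$-subset for every $\lambda<1$ while $J_1(x)$ is everything—is exactly the paper's, and those parts are correct.

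The step that fails is the parenthetical lemma you describe as routine: that an equiangular configuration of $n+1$ unit vectors in $\R^n$ is automatically an equiangular tight frame, equivalently that equiangularity forces $\alpha=1/n$ when $m=n+1$. This is false. Let $S$ be the symmetric $4\times 4$ matrix with zero diagonal, $S_{12}=S_{21}=-1$, and all other off-diagonal entries equal to $1$; its characteristic polynomial is $(x^2-1)(x^2-5)$, so $G=I_4+5^{-1/2}S$ is positive semidefinite of rank $3$ with unit diagonal and all off-diagonal entries of modulus $5^{-1/2}$. Hence $G$ is the Gram matrix of four equiangular unit vectors spanning $\R^3$ (concretely, four of the six diagonals of the icosahedron) with $\alpha=1/\sqrt{5}\neq 1/3$; every $3\times 3$ principal submatrix of $G$ is positive definite, so this frame is even full spark. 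Your argument therefore only yields the implication ``equality $\Rightarrow$ equiangular (indeed equiangular tight) frame''; the converse does not follow from what you wrote, and the example shows it cannot be repaired for plain equiangularity. The paper's own proof does not attempt your bridging claim: it cites \cite{SH03} for ``equality in \eqref{E:Welch} if and only if equiangular \emph{tight} frame'' and stops there, so the equality case should be read with ``tight'' inserted. If you state and prove the equality condition for equiangular tight frames, the rest of your write-up goes through unchanged.
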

\begin{proof}
We first assume that $(x_j)_{j=1}^{n+1}$ is full spark.
 The greatest $\vp>0$ so that $(B^o_\vp[x_j])_{j=1}^{n+1}$ is a packing in $P^{n-1}$ is given by $\vp=\frac{1}{2}\cos^{-1}(\alpha)$.  By Proposition \ref{P:pack}, the critical value $\lambda_c$ for $(x_j)_{j=1}^{n+1}$ to do $\lambda_c$-saturation recovery on $B_{\R^n}$ is given by $\lambda_c=\cos(\frac{1}{2}\cos^{-1}(\alpha))=2^{-1/2}(1+\alpha)^{1/2}$.  
  
  The Welch bound \eqref{E:Welch} gives that $\alpha\geq n^{-1}$ and equality is achieved if and only if $(x_j)_{j=1}^{n+1}$ is an equiangular tight frame by \cite{SH03}.  

  We now assume that  $(x_j)_{j=1}^{n+1}$ is not full spark.  Thus, there exists $1\leq k\leq n+1$ such that $(x_j)_{j\neq k}$ is not a frame of $\R^n$.  As $(x_j)_{j\in J_{1}^{\sharp}(x_{k})}=(x_j)_{j\neq k}$ we have by Theorem \ref{T:no} that $\lambda_c=1$. 
\end{proof}

Given $(x_j)_{j=1}^m\subseteq S_{\R^n}$, it is simple to determine the greatest radius so that a collection of balls 
centered at $([x_j])_{j=1}^m$ is a packing in $P^{n-1}$, but when $L\geq 2$ it becomes much more difficult to determine the greatest radius so that a collection of balls 
centered at $([x_j])_{j=1}^m$ is a $L$-fold multi-packing in $P^{n-1}$.  Because of this, the following corollary gives bounds for the solutions of  Problem  \ref{Q:1} and Problem  \ref{Q:2}, but does not provide complete solutions.

\begin{cor}
    Let $(x_j)_{j=1}^m\subseteq S_{\R^n}$ be a full spark frame of $\R^n$. Let 
    \begin{equation}\label{E:a}
    \alpha=\max_{1 \leq k_0 < \cdots < k_{m-n} \leq m}\,\,\min_{0 \leq i<j \leq m-n} |\langle x_{k_i}, x_{k_j}  \rangle |.        
    \end{equation}

    Then the critical value for   $(x_j)_{j=1}^m$ to do $\lambda_c$-saturation recovery on $B_{\R^n}$ satisfies
$$\alpha\leq \lambda_c \leq 2^{-1/2}(1+\alpha)^{1/2}.$$
\end{cor}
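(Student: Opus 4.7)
The strategy is to combine Theorem \ref{T:1} with Proposition \ref{P:pack} and the full spark hypothesis. Since any $n$ of the frame vectors span $\R^n$, the frame does $\lambda$-saturation recovery on $B_{\R^n}$ if and only if for every $x\in B_{\R^n}$ at most $m-n$ of the coefficients $|\langle x,x_j\rangle|$ exceed $\lambda$. The quantity $\alpha$ measures exactly how mutually correlated the most correlated $(m-n+1)$-subset of the frame can be, and the two bounds arise by analyzing this picture at two natural extremes.

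For the lower bound $\alpha\leq \lambda_c$, I would pick a subset $\{k_0,\ldots,k_{m-n}\}$ realizing the maximum in \eqref{E:a} and set $x:=x_{k_0}\in S_{\R^n}$. Then $|\langle x,x_{k_0}\rangle|=1$ and $|\langle x,x_{k_j}\rangle|\geq \alpha$ for $j=1,\ldots,m-n$, so for any $\lambda<\alpha$ all $m-n+1$ of these coefficients strictly exceed $\lambda$, forcing $|J_\lambda(x)|\leq n-1$. Full spark then implies that $(x_j)_{j\in J_\lambda(x)}$ is not a frame of $\R^n$, so by Theorem \ref{T:1}\ref{T:1c} the frame fails $\lambda$-saturation recovery on $B_{\R^n}$. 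Since this is true for every $\lambda<\alpha$, we obtain $\lambda_c\geq \alpha$.

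For the upper bound $\lambda_c\leq 2^{-1/2}(1+\alpha)^{1/2}$, I would fix any $\lambda>2^{-1/2}(1+\alpha)^{1/2}$ and set $\vp:=\cos^{-1}(\lambda)$; the identity $2\cos^2\theta-1=\cos(2\theta)$ gives $\vp<\tfrac{1}{2}\cos^{-1}(\alpha)$. By Proposition \ref{P:pack} it suffices to verify that $(B_\vp^o([x_j]))_{j=1}^m$ is an $(m-n)$-fold multi-packing in $P^{n-1}$. If this failed, some $[x]\in P^{n-1}$ would lie within projective distance less than $\vp$ of each of $[x_{k_0}],\ldots,[x_{k_{m-n}}]$ for appropriate indices, and the triangle inequality in $P^{n-1}$ would give $d([x_{k_i}],[x_{k_j}])<2\vp<\cos^{-1}(\alpha)$ for all $i<j$. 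Monotonicity of cosine on $[0,\pi/2]$ (valid since $2\vp<\pi/2$, because $\alpha\geq 0$ forces $\cos^{-1}(\alpha)\leq \pi/2$) then yields $|\langle x_{k_i},x_{k_j}\rangle|>\alpha$ for every such pair, contradicting the maximality of $\alpha$ in \eqref{E:a}. There is no serious obstacle here: both directions reduce to elementary geometry in projective space, with the lower bound witnessed by plugging a single frame vector into itself and the upper bound following from one application of the metric triangle inequality.
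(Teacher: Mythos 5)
Your proof is correct and takes essentially the same route as the paper's: the lower bound is witnessed by plugging $x_{k_0}$ into itself, and the upper bound rests on the observation that if $m-n+1$ of the coefficients of a unit vector all have magnitude at least $\lambda$, then the corresponding frame vectors have pairwise inner products of magnitude at least $2\lambda^2-1$. The only difference is packaging: you argue contrapositively through Proposition \ref{P:pack} and the projective triangle inequality, whereas the paper extracts a critical witness from Theorem \ref{T:no}\ref{T:noc} and asserts the inequality $|\langle x_{k_i},x_{k_j}\rangle|\geq 2\lambda_c^2-1$ directly --- your version in fact spells out the geometric step the paper leaves implicit.
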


\begin{proof}
Suppose that $(k_j)_{j=0}^{m-n}$ satisfies \eqref{E:a}.  Then $x_{k_j}\not\in J_{\alpha}^{\sharp}(x_{k_0})$ for all $0\leq j\leq m-n$.  Thus, $|J_{\alpha}^{\sharp}(x_{k_0})|<n$ and hence $(x_j)_{j\in J_{\alpha}^{\sharp}(x_{k_0})}$ cannot be a frame for $\R^n$.  This gives that $\alpha\leq \lambda_c$.

We now choose $x\in\R^n$ so that $(x_j)_{j\in J_{\lambda_c}^{\sharp}(x)}$ is not a frame of $\R^n$.  As $(x_j)_{j\in J}$ is full spark, it must be that  $|J_{\lambda_c}^{\sharp}(x)|<n$. Thus, there exists $(k_j)_{j=0}^{m-n}$ so that $|\langle x,x_j\rangle|\geq \lambda_c$ for all $0\leq j\leq m-n$.  It follows that $|\langle x_{k_i}, x_{k_j}  \rangle |\geq 2\lambda_c^2-1$ for all $0\leq i< j\leq m-n$.  
Thus, $\alpha\geq 2\lambda_c^2-1$ and 
$\lambda_c\leq 2^{-1/2}(1+\alpha)^{1/2}$.
\end{proof}

\section{Stability of saturation recovery}\label{S:Stab}

Recall that a frame $(x_j)_{j\in J}$ for a Hilbert space $H$ is said to do $\lambda$-saturation recovery on a set $X\subseteq\R^n$ if every $x\in X$ may be recovered from $\Phi_\lambda \Theta=(\phi_\lambda(\langle x,x_j\rangle)_{j\in J}$.  In applications, it is important that we not only be able to recover $x$, but also that this recovery be stable.  We say that $(x_j)_{j\in J}$ does $C$-stable $\lambda$-saturation recovery on $X$ if the recovery of $x$ from $\Phi_\lambda \Theta=(\phi_\lambda(\langle x,x_j\rangle)_{j\in J}$ is $C$-Lipschitz.  That is,  a frame $(x_j)_{j\in J}$ of $H$ with analysis operator $\Theta:H\rightarrow \ell_2(J)$ does {\em  $C$-stable $\lambda$-saturation recovery on $X$} if for all $x,y\in X$ we have that

\begin{equation}
\|x-y\|_{H}\leq C\big\|\Phi_\lambda \Theta x-\Phi_\lambda \Theta y\big\|_{\ell_2(J)}=C\Big(\sum_{j\in J}|\phi_\lambda(\langle x,x_j\rangle)-\phi_\lambda(\langle x,x_j\rangle)|^2  \Big)^{1/2}.
\end{equation}

It is well known that a frame for a finite dimensional Hilbert space does phase retrieval if and only if it does stable phase retrieval. This fundamental
result in the mathematics of phase retrieval has been proven in multiple papers using a variety
of methods \cite{AAFG24, AG17, BCMN14, BW15, CCD16, FOPT23}.
We now prove that this is also the case for $\lambda$-saturation recovery on $B_{\R^n}$ when $\lambda$ is strictly greater than the critical saturation level.

\begin{thm}
Let $(x_j)_{j\in J}$ be a frame for $\R^n$, and let $\lambda_c$ be the critical value for   $(x_j)_{j\in J}$ to do $\lambda_c$-saturation recovery on $B_{\R^n}$. 
  Then for all $\lambda>\lambda_c$
there exists $C_\lambda>0$ so that $(x_j)_{j\in J}$ does $C_\lambda$-stable $\lambda$-saturation recovery.
\end{thm}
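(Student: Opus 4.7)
My plan is to prove this by contradiction using a compactness argument on $B_{\R^n}$, in the spirit of how one shows that phase retrieval on a finite-dimensional space is automatically stable. Suppose no such $C_\lambda$ exists. Then there exist sequences $(x_k)_{k=1}^\infty,(y_k)_{k=1}^\infty\subseteq B_{\R^n}$ with $x_k\neq y_k$ and
\[
\frac{\|x_k-y_k\|}{\|\Phi_\lambda\Theta x_k-\Phi_\lambda\Theta y_k\|}\longrightarrow\infty.
\]
By compactness of $B_{\R^n}$, I pass to subsequences so that $x_k\to x$ and $y_k\to y$ in $B_{\R^n}$. The key property to exploit first is that $\Phi_\lambda\Theta$ is Lipschitz from $\R^n$ into $\ell_2(J)$, because $\phi_\lambda$ is $1$-Lipschitz on $\R$, and $\Theta$ is bounded. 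Thus $\Phi_\lambda\Theta x_k\to\Phi_\lambda\Theta x$ and $\Phi_\lambda\Theta y_k\to\Phi_\lambda\Theta y$.

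The argument splits into two cases. In the first case $x\neq y$. Since $\lambda>\lambda_c$, the frame does $\lambda$-saturation recovery on $B_{\R^n}$, so $\Phi_\lambda\Theta|_{B_{\R^n}}$ is injective; hence $\|\Phi_\lambda\Theta x-\Phi_\lambda\Theta y\|>0$, and the ratio converges to the finite value $\|x-y\|/\|\Phi_\lambda\Theta x-\Phi_\lambda\Theta y\|$, contradicting divergence.

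The harder case is $x=y$, which is where Theorem \ref{T:no}\ref{T:nob} becomes essential. I set $u_k=(x_k-y_k)/\|x_k-y_k\|$ and pass to a further subsequence so that $u_k\to u\in S_{\R^n}$. Since $\lambda>\lambda_c$ and $\|x\|\leq 1$, Theorem \ref{T:no}\ref{T:nob} guarantees that $(x_j)_{j\in J_\lambda^\sharp(x)}$ is a frame of $\R^n$. Because $\R^n$ is finite dimensional, I can extract a \emph{finite} subset $F\subseteq J_\lambda^\sharp(x)$ such that $(x_j)_{j\in F}$ is still a frame of $\R^n$, say with lower frame bound $A>0$. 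For each $j\in F$ we have $|\langle x,x_j\rangle|<\lambda$, so letting $\delta=\min_{j\in F}(\lambda-|\langle x,x_j\rangle|)>0$ and using that $F$ is finite, for all $k$ sufficiently large and all $j\in F$ both $|\langle x_k,x_j\rangle|<\lambda$ and $|\langle y_k,x_j\rangle|<\lambda$, so $\phi_\lambda$ acts as the identity on these coordinates. Then
\[
\|\Phi_\lambda\Theta x_k-\Phi_\lambda\Theta y_k\|^2 \;\geq\; \sum_{j\in F}|\langle x_k-y_k,x_j\rangle|^2 \;\geq\; A\,\|x_k-y_k\|^2,
\]
which forces the ratio $\|x_k-y_k\|/\|\Phi_\lambda\Theta x_k-\Phi_\lambda\Theta y_k\|$ to be bounded by $A^{-1/2}$, contradicting divergence.

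The main obstacle is Case~2, the collapse $x=y$, because there one loses the direct use of injectivity and must produce a local bi-Lipschitz estimate in a neighborhood of $x$. The decisive ingredient there is Theorem \ref{T:no}\ref{T:nob}, which is precisely the statement that above the critical level the unsaturated coordinates remain a frame at every point of $B_{\R^n}$ and hence provide the required lower frame bound $A$ after truncating to a finite subframe. This is where the hypothesis $\lambda>\lambda_c$ (rather than $\lambda\geq\lambda_c$) is used, consistent with the open problem noted just after the statement of stability for $\lambda_c$ itself.
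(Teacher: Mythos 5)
Your argument is correct and rests on the same two ingredients as the paper's proof: for $\lambda>\lambda_c$ the strictly unsaturated coordinates at each point of $B_{\R^n}$ form a frame (giving the local lower Lipschitz bound near the diagonal), while injectivity plus compactness of the ball handles pairs that remain separated. The paper packages this as a direct two-case estimate (pairs with $\|x-y\|$ below or above an explicit threshold), which produces an explicit constant $C_\lambda=\max(A^{-1/2},2^{1/2}\delta^{-1/2})$, whereas your contradiction-and-subsequence formulation is qualitatively the same argument in non-constructive form.
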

\begin{proof}
 Let $A>0$ be such that $A$ is a lower frame bound of $(x_j)_{j\in I}$ for every set $I\subseteq J$ such that $(x_j)_{j\in I}$ is a frame of $H$.  Let $B$ be an upper frame bound of $(x_j)_{j\in J}$.

Let $\lambda>\lambda_c$ and let $1>\alpha>0$ so that $\alpha\lambda>\lambda_c$.  
 We first consider $x,y\in B_H$ such  that $\|x-y\|\leq B^{-1} (1-\alpha)\lambda$.  Let $j\in J_{\alpha\lambda}(x)$ we have that 
\begin{align*}
|\langle y,x_j\rangle|&\leq |\langle x,x_j\rangle|+|\langle x-y,x_j\rangle|\\
&\leq \alpha\lambda+\|x-y\| \|x_j\|\\
&\leq \alpha\lambda+(B^{-1} (1-\alpha)\lambda)B=\lambda.
\end{align*}
Thus, if $\|x-y\|\leq B^{-1} (1-\alpha)\lambda$ then $J_{\lambda}(y)\supseteq  J_{\alpha\lambda}(x)$.  Hence we have that
\begin{align*}
A\|x-y\|^2&\leq \sum_{j\in J_{\alpha \lambda}(x)} |\langle x-y,x_j\rangle|^2\\
&= \sum_{j\in J_{\alpha \lambda}(x)} |\phi_\lambda(\langle x,x_j\rangle)-\phi_\lambda(\langle y,x_j\rangle)|^2\\
&\leq \sum_{j\in J} |\phi_\lambda(\langle x,x_j\rangle)-\phi_\lambda(\langle y,x_j\rangle)|^2.
\end{align*}
 We now consider the compact set $X:=\{(x,y)\in B_H\times B_H : \|x-y\|\geq B^{-1} (1-\alpha)\lambda\}$.  As $(x_j)_{j\in J}$ does $\lambda$-saturation recovery we have that $\sum_{j\in J} |\phi_\lambda(\langle x,x_j\rangle)-\phi_\lambda(\langle y,x_j\rangle)|^2>0$ for all $(x,y)\in X$.  As $X$ is compact, there exists $\delta>0$ so that $\sum_{j\in J} |\phi_\lambda(\langle x,x_j\rangle)-\phi_\lambda(\langle y,x_j\rangle)|^2\geq \delta$ for all $(x,y)\in X$.  We have that $\|x-y\|^2\leq 2$ for all $(x,y)\in X$ and hence
$$
2^{-1}\delta\|x-y\|^2\leq \sum_{j\in J} |\phi_\lambda(\langle x,x_j\rangle)-\phi_\lambda(\langle y,x_j\rangle)|^2\hspace{1cm}\textrm{ for all }(x,y)\in X.$$
Thus,  $(x_j)_{j\in J}$ does $C_\lambda$-stable $\lambda$-saturation recovery for $C_\lambda=\max(A^{-1/2},2^{1/2}\delta^{-1/2})$.
\end{proof}

This leaves the following open question.

\begin{prob}\label{Prob:C}
Let $(x_j)_{j=1}^m$ be a frame for $\R^n$, and let $\lambda_c$ be the critical value for   $(x_j)_{j=1}^m$ to do $\lambda_c$-saturation recovery on $B_{\R^n}$. 
  Does $(x_j)_{j\in J}$ do stable  $\lambda_c$-saturation recovery on $B_{\R^n}$?
\end{prob}

We say that $(y_j)_{j\in J}$ is an $\vp$-perturbation of a frame $(x_j)_{j\in J}$ if $\sum_{j\in J}\|x_j-y_j\|^2<\vp$.
If $(x_j)_{j\in J}$ is a frame of a Hilbert space $H$ then there exists $\vp>0$ so that  every $\vp$-perturbation of  $(x_j)_{j\in J}$  is a frame of $H$  \cite{C95}.
Likewise, if $(x_j)_{j\in J}$ is a frame for a finite dimensional Hilbert space which does  phase retrieval then there exists $\vp>0$ so that every $\vp$-perturbation of  $(x_j)_{j\in J}$  does phase retrieval \cite{B17,AFGLS23}.  It follows from Theorem \ref{T:no} that if $(x_j)_{j\in J}$ is a frame of $\R^n$ and $\lambda>\lambda_c$ then there exists $\vp>0$ so that every $\vp$-perturbation of  $(x_j)_{j\in J}$  does $\lambda$-saturation recovery on $B_{\R^n}$.  However, for each $\vp>0$ it is simple to construct an $\vp$-perturbation of $(x_j)_{j\in J}$ which does not do $\lambda_c$-saturation recovery on $B_{\R^n}$.  Indeed, if $\vp_j>0$ for all $j\in J$ and $\sum_{j\in J}\vp_j^2<\vp$ then 
choosing $(y_j)_{j\in J}=((1+\vp_j)x_j)_{j\in J}$ will suffice.
This makes solving Problem \ref{Prob:C} significantly more difficult as stability of $\lambda_c$-saturation recovery on $B_{\R^n}$ is not preserved under small perturbations.

We now consider the problem of doing stable $\lambda$-saturation recovery on sets other than $B_{\R^n}$.

\begin{thm}
    Let $(x_j)_{j\in J}$ be a finite frame for $\R^n$.  For each $x\in\R^n$, $\lambda>0$, and $C>0$ the following are equivalent.  
 \begin{enumerate}[label=\textup{(\arabic*)}]
     \item $(x_j)_{j\in J}$ does $C$-stable $\lambda$-saturation recovery on an open set containing $x$. 
     \item $(x_j)_{j\in J_{\lambda}^{\sharp}(x)}$ is a frame of $\R^n$ with lower frame bound $C^{-2}$.
        \end{enumerate}
\end{thm}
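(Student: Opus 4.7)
The plan is to prove the two implications separately, leveraging throughout that $J_\lambda^\sharp(x)$ is defined by a strict inequality so that its indices persist under small perturbations.

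For $(2)\Rightarrow(1)$, which is the easier direction, the finiteness of $J$ together with continuity supplies a $\delta>0$ such that every $y\in U:=B^o_\delta(x)$ satisfies $|\langle y,x_j\rangle|<\lambda$ for every $j\in J_\lambda^\sharp(x)$.  For any $y,z\in U$ and any $j\in J_\lambda^\sharp(x)$ this forces $\phi_\lambda(\langle y,x_j\rangle)-\phi_\lambda(\langle z,x_j\rangle)=\langle y-z,x_j\rangle$, so the lower frame bound hypothesis gives
$$C^{-2}\|y-z\|^2 \leq \sum_{j\in J_\lambda^\sharp(x)}|\langle y-z,x_j\rangle|^2 \leq \|\Phi_\lambda\Theta y-\Phi_\lambda\Theta z\|^2,$$
which is exactly $C$-stable $\lambda$-saturation recovery on $U$.

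For $(1)\Rightarrow(2)$, suppose $(x_j)_{j\in J}$ does $C$-stable $\lambda$-saturation recovery on an open set $U\ni x$.  The obstruction to a direct perturbation argument centered at $x$ is the boundary set $J^\ast:=J_\lambda(x)\setminus J_\lambda^\sharp(x)=\{j:|\langle x,x_j\rangle|=\lambda\}$, on which $\Phi_\lambda\Theta$ fails to be affine near $x$.  I would resolve this by shifting to the nearby point $x':=(1+\vp)x$ for a sufficiently small $\vp>0$.  Since $\mathrm{sign}(\langle x,x_j\rangle)\langle x,x_j\rangle=\lambda>0$ for every $j\in J^\ast$, multiplication by $1+\vp$ strictly saturates every boundary index with the same sign it carried at $x$; taking $\vp$ small enough further ensures $x'\in U$, that every $j\in J_\lambda^\sharp(x)$ remains strictly unsaturated at $x'$, and that every $j$ with $|\langle x,x_j\rangle|>\lambda$ remains strictly saturated at $x'$.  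Continuity then produces a ball $B^o_\delta(x')\subseteq U$ on which the saturation pattern is constant, with unsaturated index set exactly $J_\lambda^\sharp(x)$.  For an arbitrary $h\in\R^n\setminus\{0\}$ and sufficiently small $t>0$, set $y:=x'+th$ and $z:=x'-th$, both in $B^o_\delta(x')$.  On every $j\notin J_\lambda^\sharp(x)$ the points $y$ and $z$ share the same saturation, so these coordinates cancel in $\Phi_\lambda\Theta y-\Phi_\lambda\Theta z$; on every $j\in J_\lambda^\sharp(x)$ both points are unsaturated, contributing $2t\langle h,x_j\rangle$.  The $C$-stability inequality therefore reads $4t^2\|h\|^2\leq 4C^2t^2\sum_{j\in J_\lambda^\sharp(x)}|\langle h,x_j\rangle|^2$, and cancelling $4t^2$ yields the lower frame bound $C^{-2}$ for $(x_j)_{j\in J_\lambda^\sharp(x)}$.

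The main obstacle is finding a single perturbation of $x$ that simultaneously pushes every boundary coordinate strictly past the saturation level with the correct sign.  The radial shift $x\mapsto(1+\vp)x$ handles this uniformly because the signed quantity $\mathrm{sign}(\langle x,x_j\rangle)\langle x,x_j\rangle$ is identically $\lambda$ on $J^\ast$; more elaborate perturbation strategies (for instance via a Gordan-type alternative) would also work but are not needed.  Once this shift is in place the remaining work is standard continuity bookkeeping together with the observation that $\Phi_\lambda\Theta$ restricts to an affine map on $B^o_\delta(x')$.
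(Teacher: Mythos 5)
Your proposal is correct and follows essentially the same route as the paper: the forward direction via a neighborhood on which the coordinates in $J_{\lambda}^{\sharp}(x)$ stay strictly unsaturated, and the converse via the radial shift $x\mapsto(1+\vp)x$ (the paper's $Rx$ with $R>1$) to clear the boundary indices, followed by a small perturbation in an arbitrary direction. The only cosmetic difference is that you compare the symmetric pair $x'\pm th$ while the paper compares $Rx+ay$ with $Rx$; the resulting frame-bound inequality is identical.
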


\begin{proof}
    We first assume that $(x_j)_{j\in J_{\lambda}^{\sharp}(x)}$ is a frame of $\R^n$ with lower frame bound $C^{-2}$.  As $J$ is a finite set, we may choose a neighborhood $U$ of $x$ so that $|\langle y,x_j\rangle|<\lambda$ for all $y\in U$ and $j\in J_{\lambda}^{\sharp}(x)$.  For all $y,z\in U$ we have that
    $$C^{-2}\|y-z\|^2\leq \sum_{j\in  J_{\lambda}^{\sharp}(x)}|\langle y-z,x_j\rangle|^2=\sum_{j\in  J_{\lambda}^{\sharp}(x)}|\phi_\lambda(\langle y,x_j\rangle)-\phi_\lambda(\langle z,x_j\rangle)|^2.
    $$
Thus, $(x_j)_{j\in J}$ does $C$-stable $\lambda$-saturation recovery on $U$.

We now assume that $(x_j)_{j\in J}$ does $C$-stable $\lambda$-saturation recovery on some open neighborhood $U$ of $x$.  Let $R>1$ be sufficiently close to $1$ such that $Rx\in U$, $J_{\lambda}^{\sharp}(Rx)=J_{\lambda}^{\sharp}(x)$, and that $|\langle Rx,x_j\rangle|\neq \lambda$ for all $j\in J$.  Let $y\in\R^n$ and choose $a>0$ small enough so that $Rx+ay\in U$, $J_{\lambda}^{\sharp}(Rx+ay)=J_{\lambda}^{\sharp}(Rx)$, and $\mathrm{sign}(\langle Rx+ay,x_j\rangle)=\mathrm{sign}(\langle Rx,x_j\rangle)$ for all $j\in (J_{\lambda}^{\sharp}(Rx))^{c}$.  We have that
\begin{align*}
    \|ay\|^2&=\|Rx+ay-Rx\|^2\\
    &\leq C^2\sum_{j\in J}|\phi_\lambda(\langle Rx+ay,x_j\rangle)-\phi_\lambda(\langle Rx,x_j\rangle)|^2\hspace{1cm}\textrm{ as }Rx,(Rx+ay)\in U,\\
    &= C^2\sum_{j\in J_{\lambda}^{\sharp}(x)}|\langle Rx+ay,x_j\rangle-\langle Rx,x_j\rangle|^2\\
    &= C^2\sum_{j\in J_{\lambda}^{\sharp}(x)}|\langle ay,x_j\rangle|^2.
\end{align*}
Thus, we have that $C^{-2}\|y\|\leq \sum_{j\in J_{\lambda}^{\sharp}(x)}|\langle ay,x_j\rangle|^2$.  Hence, $C^{-2}$ is a lower frame bound for $(x_j)_{j\in J_{\lambda}^{\sharp}(x)}$.
\end{proof}

\section{The frame algorithm }\label{S:FA}

If $(x_j)_{j\in J}$ is a frame of a Hilbert space $H$ with analysis operator $\Theta:H\rightarrow \ell_2(J)$ then the {\em frame operator} $\Theta^*\Theta:H\rightarrow H$ is given by $\Theta^*\Theta x=\sum \langle x,x_j\rangle x_j$.  As $(x_j)_{j\in J}$ is a frame, we have that the analysis operator $\Theta:H\rightarrow \ell_2(J)$ is an isomorphic embedding, and the frame operator 
$\Theta^*\Theta:H\rightarrow H$ is bounded and has bounded inverse.  Furthermore, if $(x_j)_{j\in J}$  has frame bounds $0<A\leq B$ then the frame operator satisfies $\|\Theta^*\Theta\|\leq B$ and $\|(\Theta^*\Theta)^{-1}\|\leq A^{-1}$.  We may thus recover any vector $x\in H$ from the frame coefficients $\Theta x=(\langle x,x_j\rangle)_{j\in J}$ by applying the linear operator $(\Theta^*\Theta)^{-1}\Theta^*$.  Doing this in applications would require inverting a matrix which can be computationally expensive.  However, there exist computationally efficient algorithms which use the frame coefficients $\Theta x=(\langle x,x_j\rangle)_{j\in J}$ to build a sequence of vectors $(y_n)_{n=0}^\infty\subseteq H$ that converges to $x$ exponentially fast.  The {\em frame algorithm} uses a parameter $0<\alpha<2/B$ to build a sequence $(y_n)_{n=0}^\infty\subseteq H$, as explained in the following theorem.  The frame algorithm is a well known tool (see for example \cite[Theorem III]{DS52} or \cite{CKF13}) and the basic framework was shown by Gr\"ochenig to allow for even faster convergence with certain modifications \cite{G93}.

\begin{thm}\label{T:FA}(Frame algorithm)
 Let $(x_j)_{j\in J} $ be a frame of a  Hilbert space $ H $ with frame bounds $ A, B$ and analysis operator $\Theta:H\rightarrow\ell_2(J) $.  Let $0<\alpha<B/2$.  Given an element in the range of the analysis operator $ \Theta x\in \ell_2(J)$,  define a sequence $(y_k)^\infty_{k=0} $ in $ H$ by $y_0=0$ and 
 \begin{equation}\label{E:FA}
 y_{k+1}:=y_{k} + \alpha\, \Theta^* (\Theta x-\Theta y_{k})= y_{k} + \alpha\, \sum_{j\in J} \langle x-y_k,x_j\rangle x_j \hspace{.5cm}\textrm{ for all } k \geq 0.
 \end{equation}
 Then   $\Vert x- y_{k+1}\Vert \le C_{\alpha} \Vert x-y_k \Vert$ for all $k\geq 0$, where $C_{\alpha}: = \max{\lbrace \vert 1-\alpha A\vert, \vert 1-\alpha B\vert \rbrace}$. Thus,  $(y_k)^\infty_{k=0} $ converges to $x$ and satisfies
 $$ \|x-y_{k}\|\leq C_\alpha^k \|x\| \hspace{1cm}\textrm{ for all } k \geq 0. $$
 Note that $\frac{B-A}{B+A}$ is the optimal value for $C_{\alpha}$ and it occurs when $\alpha = \frac{2}{A+B}$. 
 \end{thm}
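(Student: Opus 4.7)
The plan is to recast the iteration as a fixed power of a single self-adjoint operator acting on the error $x-y_k$, then bound the operator norm using spectral information on the frame operator $S=\Theta^*\Theta$.

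First I would note that the defining relation $y_{k+1}=y_k+\alpha\Theta^*\Theta(x-y_k)=y_k+\alpha S(x-y_k)$ gives, after subtracting both sides from $x$, the one-line identity
\begin{equation*}
x-y_{k+1}=(I-\alpha S)(x-y_k).
\end{equation*}
Since $y_0=0$, an immediate induction then yields $x-y_k=(I-\alpha S)^k x$ for every $k\ge 0$. Hence both claims $\|x-y_{k+1}\|\le C_\alpha\|x-y_k\|$ and $\|x-y_k\|\le C_\alpha^k\|x\|$ reduce to the single operator-norm estimate $\|I-\alpha S\|\le C_\alpha$.

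Next I would prove that operator-norm bound. The frame inequality \eqref{E:framed} translates into the operator inequality $A\,I\preceq S\preceq B\,I$, and $S$ is self-adjoint and positive. Consequently $I-\alpha S$ is self-adjoint, and by the spectral theorem (or the elementary fact that for a self-adjoint $T$ one has $\|T\|=\sup_{\|v\|=1}|\langle Tv,v\rangle|$) its norm equals the maximum of $|1-\alpha\mu|$ as $\mu$ ranges over $\mathrm{spec}(S)\subseteq[A,B]$. Since $t\mapsto|1-\alpha t|$ is convex on $[A,B]$, its maximum on this interval is attained at an endpoint, giving
\begin{equation*}
\|I-\alpha S\|=\max_{\mu\in\mathrm{spec}(S)}|1-\alpha\mu|\le\max\{|1-\alpha A|,\,|1-\alpha B|\}=C_\alpha.
\end{equation*}
Under the hypothesis on $\alpha$ (the intended range being $0<\alpha<2/B$, so that both $|1-\alpha A|<1$ and $|1-\alpha B|<1$), we have $C_\alpha<1$, which delivers geometric convergence $y_k\to x$.

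Finally, for the optimality of $\alpha=2/(A+B)$, I would minimize $C_\alpha=\max\{|1-\alpha A|,|1-\alpha B|\}$ over $\alpha>0$. On the admissible range $1-\alpha A$ decreases from $1$ while $1-\alpha B$ decreases through $0$ to negative values, so the max is minimized when the two absolute values agree. Solving $1-\alpha A=-(1-\alpha B)$ gives $\alpha=2/(A+B)$, and substituting back yields $C_\alpha=(B-A)/(B+A)$. The main (very mild) obstacle is really just bookkeeping: confirming that $S$ is self-adjoint so that the spectral/convexity argument applies and handling the sign of $1-\alpha A$ versus $1-\alpha B$ in the endpoint comparison; both are standard, and no delicate step is involved.
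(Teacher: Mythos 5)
Your proof is correct. The paper does not actually prove Theorem \ref{T:FA}; it states it as a classical result and cites \cite{DS52} and \cite{CKF13}, and your argument --- rewriting the error recursion as $x-y_{k+1}=(I-\alpha\Theta^*\Theta)(x-y_k)$ and bounding $\|I-\alpha\Theta^*\Theta\|$ by $\max\{|1-\alpha A|,|1-\alpha B|\}$ via self-adjointness and the operator inequality $AI\preceq\Theta^*\Theta\preceq BI$ coming from \eqref{E:framed} --- is exactly the standard proof found in those references, including the optimization giving $\alpha=2/(A+B)$ and $C_\alpha=(B-A)/(B+A)$. You were also right to flag that the stated hypothesis $0<\alpha<B/2$ is a typo for $0<\alpha<2/B$, which is what makes $C_\alpha<1$.
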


Our goal is to adapt the frame algorithm to the non-linear problem of recovering a vector from saturated measurements.  For the sake of clarity, we will refer to the algorithm presented in Theorem \ref{T:FA} as the {\em linear frame algorithm}.  Let $(x_j)_{j\in J}$ be a frame of a Hilbert space $H$ and let  $\lambda>0$.  
 For $x\in H$ we denote the following sets,
\begin{align*}
&\textrm{Unsaturated coordinates:}\hspace{.5cm} &&J_{\lambda}(x)=\{j\in J:|\langle x,x_j\rangle|\leq \lambda\},\\
&\textrm{Positively saturated coordinates:} && J_{\lambda}^{+}(x)=\{j\in J:\langle x,x_j\rangle > \lambda\},\\
&\textrm{Negatively saturated coordinates:} && J_{\lambda}^{-}(x)=\{j\in J:\langle x,x_j\rangle < -\lambda\}.
\end{align*}
It will also be convenient to define specific subsets of $J_{\lambda}^{+}(x)$ and $J_{\lambda}^{-}(x)$, respectively, relative to a fixed element $y\in H$.  Thus, we also define
\begin{align*}
J_{\lambda}^{+}(x,y)&=\{j\in J_{\lambda}^{+}(x):\langle y,x_{j}\rangle<\lambda\}, \\
J_{\lambda}^{-}(x,y)&=\{j\in J_{\lambda}^{-}(x):\langle y,x_{j}\rangle>-\lambda\}.
\end{align*}
We now recursively define the {\em $\lambda$-saturated frame algorithm} for recovering a vector $x\in H$ from the values $(\phi_\lambda(\langle x,x_j\rangle))_{j\in J}$.  We set $y_0=0$ and for $k\in\N_0$ and $y_k\in H$ we choose $\alpha_k,\beta_k\geq 0$ and let
\begin{equation}\label{E:SFA}
\begin{split}
y_{k+1}=y_k+\alpha_k&\sum_{j\in J_{\lambda}(x)}\Big(\langle x,x_j\rangle-\langle y_k,x_j\rangle\Big)x_j
+\beta_k\sum_{j\in J_{\lambda}^{+}(x,y_k)}\Big(\lambda- \langle y_k,x_j\rangle\Big)x_j\\
&+\beta_k\sum_{j\in J_{\lambda}^{-}(x,y_k)}\Big(-\lambda-\langle y_k,x_j\rangle\Big)x_j.
\end{split}
\end{equation}
Intuitively, the terms where $j\in J_{\lambda}^{+}(x,y_{k})$ are working to make $\phi_\lambda(\langle y_{k+1},x_j\rangle)$ closer to $\phi_\lambda(\langle x,x_j\rangle)=\lambda$, and the terms with $j\in J_{\lambda}^{-}(x,y)$ are working to make $\phi_\lambda(\langle y_{k+1},x_j\rangle)$ closer to $\phi_\lambda(\langle x,x_j\rangle)=-\lambda$.  Note that we know the vector $y_k\in H$ and thus we are able to use the actual frame coefficients $(\langle y_k,x_j\rangle)_{j\in J}$ rather than the saturated frame coefficients $(\phi_\lambda(\langle y_k,x_j\rangle))_{j\in J}$.
In the linear frame algorithm, Theorem \ref{T:FA_P} gives that the optimal choice for the scalar $\alpha$ is given by $\alpha=2/(A+B)$ where $A$ and $B$ are the frame bounds for $(x_j)_{j\in J}$.  However, the optimal choice for $\alpha_k$ and $\beta_k$ in the $\lambda$-saturated frame algorithm can change each step.

Algorithms for doing $\lambda$-saturation recovery typically either use the saturated terms to give a bound that the vector should satisfy, or throw out the saturated terms and reconstruct using only the unsaturated terms.  
Our algorithm can be adapted to either method by choosing different values for the parameters $\alpha_k,\beta_k\geq0$.  If we choose $\beta_k=0$ and let $\alpha_k=2/(A_x+B_x)$, where $A_x$ and $B_x$ are the frame bounds for $(x_j)_{j\in J_{\lambda}(x)}$, then applying the $\lambda$-saturated frame algorithm to $(\phi_\lambda(\langle x,x_j\rangle))_{j\in J}$ is the same as applying the optimal linear frame algorithm to $(\langle x,x_j\rangle)_{j\in J_{\lambda}(x)}$.  There is not one best approach to doing saturation recovery, in general, and we consider how the $\lambda$-saturated frame algorithm performs in different situations.

When working with a particular frame $(x_j)_{j\in J}$ it is expected that the frame bounds of $(x_j)_{j\in J}$ are known, but it is not reasonable to assume that frame bounds have been calculated for every subset of $(x_j)_{j\in J}$. Thus,   
when doing saturation recovery using only the unsaturated frame coefficients, one can choose the parameter $\alpha$ in \eqref{E:FA} based on the full frame $(x_j)_{j\in J}$ or one can calculate the frame bounds for $(x_j)_{j\in J_{\lambda}(x)}$ and use that to choose the corresponding optimal parameter $\alpha$.  In the following theorem, we show that (except in trivial circumstances where they are equivalent) if one chooses the parameter $\alpha$ based on the full frame $(x_j)_{j\in J}$ then the $\lambda$-saturated frame algorithm always outperforms the linear frame algorithm based solely on the unsaturated coordinates.

\begin{thm}\label{T:satfa}
    Let $(x_j)_{j\in J}$  be a frame for a Hilbert space $H$ with frame bounds $A\leq B$. 
  Let $x\in H$ and suppose that $(y_k)_{k=0}^\infty\subseteq H$ is constructed by the $\lambda$-saturated frame algorithm with $\alpha_k=\beta_k=2/(A+B)$ for all $k\in\N_0$.    For each $k\in\N_0$, if the optimal lower frame bound for $(x_j)_{j\in J_{\lambda}(x)}$ is strictly less than the optimal lower frame bound for $(x_j)_{j\in J_{\lambda}(x)\cup J_{\lambda}^{+}(x,y_k)\cup J_{\lambda}^{-}(x,y_{k})}$ then there exists $\vp_{x,y_k}>0$ so that
\begin{equation}
    \|x-y_{k+1}\|\leq(1-\vp_{x,y_k})C_{2/(A+B)}\|x-y_k\|,
\end{equation}
  where  $C_{2/(A+B)}$ is the constant in Theorem \ref{T:FA} for applying the linear frame algorithm to $(x_j)_{j\in J_{\lambda}(x)}$ with coefficient $\alpha=2/(A+B)$.
\end{thm}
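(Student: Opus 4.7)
The plan is to rewrite one step of the saturated frame algorithm in the form $x - y_{k+1} = (I - \alpha_k T_k^\gamma)(x - y_k)$ for a suitable self-adjoint positive operator $T_k^\gamma$, then bound $\sigma(T_k^\gamma)$ so that the standard linear-frame-algorithm analysis yields a strictly better contraction rate than $C_{2/(A+B)}$.

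First I would introduce scalars $\gamma_j^{(k)} \in [0,1]$ so that every coefficient appearing in the update rule \eqref{E:SFA} can be written as $\gamma_j^{(k)}\langle x - y_k, x_j\rangle$: set $\gamma_j^{(k)} = 1$ for $j \in J_\lambda(x)$; set $\gamma_j^{(k)} = (\lambda - \langle y_k, x_j\rangle)/\langle x - y_k, x_j\rangle$ for $j \in J_\lambda^+(x, y_k)$; use the analogous ratio for $j \in J_\lambda^-(x, y_k)$; and set $\gamma_j^{(k)} = 0$ otherwise. The strict inequalities in the definitions of $J_\lambda^{\pm}(x)$ and $J_\lambda^{\pm}(x, y_k)$ make the denominators nonzero and force $\gamma_j^{(k)} \in (0, 1]$. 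Defining $T_k^\gamma z := \sum_{j \in J}\gamma_j^{(k)}\langle z, x_j\rangle x_j$, one reads off from \eqref{E:SFA} that $y_{k+1} - y_k = \alpha_k T_k^\gamma(x - y_k)$, so $x - y_{k+1} = (I - \alpha_k T_k^\gamma)(x - y_k)$; the operator $T_k^\gamma$ is self-adjoint and positive.

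Next I would bound $\sigma(T_k^\gamma)$. Decompose $T_k^\gamma = T_U + T_{P_k}^\gamma$ where $U = J_\lambda(x)$ and $P_k = J_\lambda^+(x, y_k) \cup J_\lambda^-(x, y_k)$; the upper frame bound forces $P_k$ to be finite, so $\delta_k := \min_{j \in P_k}\gamma_j^{(k)} > 0$. A quadratic-form calculation gives $T_{P_k}^\gamma \geq \delta_k T_{P_k}$, hence $T_k^\gamma \geq (1 - \delta_k) T_U + \delta_k T_{S_k}$, where $S_k = U \cup P_k$. Writing $A_u$ and $A_k$ for the optimal lower frame bounds of $(x_j)_{j \in U}$ and $(x_j)_{j \in S_k}$ respectively, this yields $\lambda_{\min}(T_k^\gamma) \geq A_k^\gamma := (1 - \delta_k) A_u + \delta_k A_k$, and the hypothesis $A_u < A_k$ together with $\delta_k > 0$ gives $A_k^\gamma > A_u$. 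Since $\gamma_j^{(k)} \leq 1$ for all $j$, the upper end obeys $\lambda_{\max}(T_k^\gamma) \leq B$, so $\sigma(T_k^\gamma) \subseteq [A_k^\gamma, B]$.

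To finish, I would apply $\|I - \alpha_k T_k^\gamma\| = \max_{\mu \in \sigma(T_k^\gamma)}|1 - \alpha_k \mu|$ with $\alpha_k = 2/(A+B)$. The function $\mu \mapsto |1 - \alpha_k \mu|$ is V-shaped with zero at $(A+B)/2$, so its maximum on any interval is attained at an endpoint. Since $A_k^\gamma \leq A_k \leq A \leq (A+B)/2$, the point $A_k^\gamma$ lies at least as far from $(A+B)/2$ as $B$ does, so the maximum on $[A_k^\gamma, B]$ equals $1 - 2A_k^\gamma/(A+B)$. The same analysis applied to $(x_j)_{j \in U}$ shows $C_{2/(A+B)} = 1 - 2A_u/(A+B)$, so setting $\vp_{x, y_k} := 1 - (A + B - 2A_k^\gamma)/(A + B - 2A_u) > 0$ concludes the proof. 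The main obstacle I anticipate is the bookkeeping around the hybrid operator $T_k^\gamma$: in particular, justifying that the seemingly crude upper bound $\lambda_{\max}(T_k^\gamma) \leq B$ still suffices, which works out precisely because $A_u \leq A$ forces the V-shaped maximum to be attained at the lower endpoint rather than the upper one.
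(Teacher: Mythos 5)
Your proposal is correct and follows essentially the same route as the paper: both arguments introduce the ratios $\gamma_j\in(0,1)$ to reinterpret the saturated step as one step of the linear frame algorithm for the reweighted frame $(x_j)_{j\in J_\lambda(x)}\cup(\gamma_j^{1/2}x_j)_{j\in J_S}$, and then show its optimal lower frame bound strictly exceeds that of $(x_j)_{j\in J_\lambda(x)}$ while the upper bound stays at most $B$. Your explicit convex-combination bound $T_k^\gamma\geq(1-\delta_k)T_U+\delta_k T_{S_k}$ just spells out the inequality $A'<A''\leq A$ that the paper asserts more briefly.
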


\begin{proof}
 Let $k\in\N_0$ and denote  $J_{S}=J_{\lambda}^{+}(x,y_k)\cup J_{\lambda}^{-}(x,y_{k})$.  
For each $j\in J_{S}$ we let $\gamma_j=(\phi_\lambda(\langle x,x_j\rangle)-\langle y_k,x_j\rangle)/(\langle x-y_k,x_j\rangle)$. Note that $1>\gamma>0$ for all $j\in J_S$.
 Let $A',B'> 0$ be the optimal frame bounds for  $(x_j)_{j\in J_{\lambda}(x)}$, and let $A'',B''>0$ be the optimal frame bounds for the frame $( x_j)_{j\in J_{\lambda}(x)}\cup(\gamma^{1/2} x_j)_{j\in J_{S}}$.  
 Suppose that the optimal lower frame bound for 
$(x_j)_{j\in J_{\lambda}(x)\cup J_{S}}$ is strictly greater than $A'$.  As $0<\gamma_j<1$ for all $j\in J_{S}$ we have that $A'<A''\leq A$ and $B'\leq B''\leq B$.
By Theorem \ref{T:FA}, applying the linear frame algorithm to $(x_j)_{j\in J_{\lambda}(x)}$ using $x,y_k\in H$ with $\alpha=2/(A+B)$ gives the constant $C_{2/(A+B)}=1-2A'/(A+B)$ as $\alpha<2/(A'+B')$.

 Note that applying the $\lambda$-saturated frame algorithm with the frame $(x_j)_{j\in J}$ using $x,y_k\in H$ with $\alpha_k=\beta_k=2/(A+B)$ will produce the same vector $y_{k+1}$ as applying the linear frame algorithm with the frame  $(x_j)_{j\in J_{\lambda}(x)}\cup(\gamma^{1/2} x_j)_{j\in J_{S}}$ using $x,y_k\in H$ with $\alpha=2/(A+B)$. 
Thus,
\begin{align*}
\|x-y_{k+1}\|&\leq (1-2A''/(A+B))\|x-y_k\|\hspace{2cm}\textrm{ by Theorem \ref{T:FA}},\\
&\leq(1-\vp_{x,y_k})(1-2A'/(A+B))\|x-y_k\|\hspace{1cm}\textrm{ for some }0<\vp_{x,y_k}<1,\\
&=(1-\vp_{x,y_k})C_{2/(A+B)}\|x-y_k\|.
\end{align*}
\end{proof}

We now consider the  $\lambda$-saturated frame algorithm for the case that $(x_j)_{j\in J}$ is a Parseval frame and take $\alpha_k=\beta_k=1$ for all $k\in \N$. Note in the case of a Parseval frame, if none of the coordinates are saturated then $x=\sum_{j\in J}\langle x,x_j\rangle x_j$ and the frame algorithm will terminate at $x$ immediately.  If there are saturated coefficients however then the frame algorithm may not reach $x$ in finitely many steps, but we prove that it converges exponentially fast.  The convergence rate of the linear frame algorithm depends on the frame bounds, and we will see that the convergence rate of the $\lambda$-saturated frame algorithm may be estimated by the stability constant of $\lambda$-saturation recovery on the ball $2\|x\|B_H$. We make this assumption about stability on a larger ball than $\|x\| B_H$ as when building the sequence $(y_n)_{n=0}^\infty$, it is possible for the $\lambda$-saturated frame algorithm to go outside the ball of radius $\|x\|$.  However, we expect that a much smaller radius than $2\|x\|$ would be sufficient.
\begin{thm}\label{T:FA_P}
Let $(x_j)_{j\in J}$ be a Parseval frame for a Hilbert space $H$.  Let $x\in H$ and suppose that $(x_j)_{j\in J}$  does $C$-stable $\lambda$-saturation recovery on $2\|x\| B_H$.  If $(y_k)_{k=0}^\infty\subseteq H$ is constructed by the $\lambda$-saturated frame algorithm with $\alpha_k=\beta_k=1$ for all $k\geq0$ then 
  $\|x-y_{k+1}\|^2 \leq ( 1-C^{-2} )\|x-y_{k}\|^2$ for all $k\geq 0$.  Hence,
 $$\|x-y_{k}\|\leq ( 1-C^{-2} )^{k/2}\|x\|\hspace{1cm}\textrm{ for all }k\geq0.$$
\end{thm}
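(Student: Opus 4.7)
The plan is to write the update as $y_{k+1}-y_k=\sum_{j\in J}c_j x_j$, where
\[
c_j=\begin{cases}\langle x-y_k,x_j\rangle & j\in J_\lambda(x),\\ \lambda-\langle y_k,x_j\rangle & j\in J_\lambda^+(x,y_k),\\ -\lambda-\langle y_k,x_j\rangle & j\in J_\lambda^-(x,y_k),\\ 0 & \text{otherwise,}\end{cases}
\]
expand
\[
\|x-y_{k+1}\|^2=\|x-y_k\|^2-2\langle x-y_k,\,y_{k+1}-y_k\rangle+\|y_{k+1}-y_k\|^2,
\]
and show the last two terms together contribute at most $-\sum_j c_j^2$. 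From there, stability will feed in the constant $C^{-2}$. I will simultaneously maintain the inductive hypothesis $\|x-y_k\|\le \|x\|$, which holds at $k=0$ since $y_0=0$, and implies $\|y_k\|\le 2\|x\|$ so that the stability inequality may be applied to the pair $(x,y_k)\in 2\|x\|B_H$.

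The first key estimate is the pointwise bound $c_j\langle x-y_k,x_j\rangle\ge c_j^2$ for every $j\in J$. On $J_\lambda(x)$ this is an equality by definition of $c_j$. On $J_\lambda^+(x,y_k)$ both $c_j=\lambda-\langle y_k,x_j\rangle$ and $\langle x-y_k,x_j\rangle=\langle x,x_j\rangle-\langle y_k,x_j\rangle$ are positive, and the latter dominates the former because $\langle x,x_j\rangle>\lambda$; the analogous argument handles $J_\lambda^-(x,y_k)$, and on the remaining indices $c_j=0$. Summing and using Parseval (which gives $\|\sum_j c_j x_j\|^2\le\sum_j c_j^2$, since the synthesis operator of a Parseval frame is a coisometry) yields
\[
\|x-y_{k+1}\|^2\le \|x-y_k\|^2-2\sum_j c_j^2+\sum_j c_j^2=\|x-y_k\|^2-\sum_j c_j^2.
\]

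The second key estimate is $|c_j|\ge|\phi_\lambda(\langle x,x_j\rangle)-\phi_\lambda(\langle y_k,x_j\rangle)|$ for every $j\in J$. This is again a straightforward case analysis on whether $\langle x,x_j\rangle$ and $\langle y_k,x_j\rangle$ each lie in $[-\lambda,\lambda]$, exceed $\lambda$, or fall below $-\lambda$; in each case one either has equality of $c_j$ with the $\phi_\lambda$-difference or $|c_j|$ strictly exceeds it (for instance, when $\langle x,x_j\rangle>\lambda$ and $\langle y_k,x_j\rangle<-\lambda$ one has $c_j=\lambda-\langle y_k,x_j\rangle>2\lambda$ versus a $\phi_\lambda$-difference of $2\lambda$). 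Combining with $C$-stable $\lambda$-saturation recovery on $2\|x\|B_H$ gives
\[
\sum_j c_j^2\ge \sum_j|\phi_\lambda(\langle x,x_j\rangle)-\phi_\lambda(\langle y_k,x_j\rangle)|^2\ge C^{-2}\|x-y_k\|^2,
\]
which together with the previous display produces $\|x-y_{k+1}\|^2\le (1-C^{-2})\|x-y_k\|^2$. Since $\phi_\lambda$ is $1$-Lipschitz and $(x_j)$ is Parseval, one sees $C\ge 1$ automatically, so $1-C^{-2}\in[0,1)$; this also gives $\|x-y_{k+1}\|\le\|x-y_k\|\le\|x\|$, closing the induction. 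Iterating the contraction yields $\|x-y_k\|\le(1-C^{-2})^{k/2}\|x\|$ as claimed.

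The main subtlety is the bookkeeping in the two case-analysis lemmas; the geometric idea behind them is that along each coordinate the algorithm updates $y_k$ by an amount whose sign matches the residual and whose magnitude is at most the true residual (ensuring the $c_j\langle x-y_k,x_j\rangle\ge c_j^2$ inequality) yet whose magnitude dominates the observable $\phi_\lambda$-discrepancy (ensuring the second inequality). Verifying the four sign patterns of $(\langle x,x_j\rangle,\langle y_k,x_j\rangle)$ relative to $\pm\lambda$ in both estimates is the only technically delicate part; everything else is the same expansion that drives the standard linear frame algorithm of Theorem \ref{T:FA}.
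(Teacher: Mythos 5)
Your proposal is correct and follows essentially the same route as the paper's proof: the same decomposition of $J$ into unsaturated and ``active'' saturated coordinates, the same two coordinatewise sign/magnitude estimates (that the step coefficient $c_j$ has the sign of the residual $\langle x-y_k,x_j\rangle$ with $|c_j|\leq|\langle x-y_k,x_j\rangle|$, yet $|c_j|\geq|\phi_\lambda(\langle x,x_j\rangle)-\phi_\lambda(\langle y_k,x_j\rangle)|$), the same use of Parseval via the norm-one synthesis operator, and the same invocation of $C$-stability at the end. The only difference is bookkeeping --- you expand $\|x-y_{k+1}\|^2$ as a descent step while the paper bounds $\|\Theta^*f\|^2\leq\|f\|^2$ for the coefficient vector $f$ of the new residual --- and your explicit remarks that $C\geq 1$ and that the induction keeps $y_k\in 2\|x\|B_H$ are welcome touches the paper leaves implicit.
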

\begin{proof}
As $y_0=0$, we trivially have that $\|x-y_0\|\leq \|x\|$.
We now fix $k\in\N_0$ and assume that $\|x-y_k\|\leq ( 1-C^{-2} )^{k/2}\|x\|$.   Hence $y_k\in 2\|x\|B_H$. We denote the coordinates that get used in the $\lambda$-saturated frame algorithm as $J_F=J_{\lambda}(x)\cup J_{\lambda}^{+}(x,y_k)\cup J_{\lambda}^{-}(x,y_{k})$.

We have \begin{equation}\label{E:Pnp1}
x-y_{k+1}=x-y_k-\sum_{j\in J_F}\Big(\phi_\lambda(\langle x,x_j\rangle)-\langle y_k,x_j\rangle\Big)x_j.
\end{equation}
 The fact that $(x_j)_{j\in J}$ is a Parseval frame implies that 
\begin{equation}\label{E:Pn}
   x- y_{k}= \sum_{j\in J} \langle x- y_{k},x_j \rangle x_j. 
\end{equation}
Thus, we may substitute \eqref{E:Pn} into \eqref{E:Pnp1} to obtain 
 \begin{equation}
  x- y_{k+1}=\sum_{j\in J^c_F} \langle x- y_{k},x_j \rangle x_j+ \sum_{j\in J_F} \bigg[\langle x-y_{k}, x_j\rangle- \Big(\phi_{\lambda} (\langle x,x_j \rangle)-  ( \phi_{\lambda} (\langle y_{k},x_j \rangle)\Big)\bigg]x_j.   
 \end{equation}
We consider the vector $f=\big(\langle x-y_{k}, x_j\rangle\big)_{j\in J_F^c}\cup\big(\langle x-y_{k}, x_j\rangle - (\phi_{\lambda} (\langle x,x_j \rangle)-  ( \phi_{\lambda} (\langle y{k},x_j \rangle\big)_{j\in J_F}\in\ell_2(J)$.  Let $\Theta:H\rightarrow \ell_2(J)$ be the analysis operator of the frame $(x_j)_{j\in J}$ and let $\Theta^*:\ell_2(J)\rightarrow H$ be the synthesis operator.  Note that $x-y_{n+1}=\Theta^* f$ and that $\|\Theta\|=\|\Theta^*\|=1$ as $(x_j)_{j\in J}$ is a Parseval frame.  We now have that, 
  \begin{align*}
  \|x-y_{k+1}\|^2&=\|\Theta^{*} f\|^2\\
  &\leq \|f\|^2\hspace{.5cm}\textrm{ as }\|\Theta^*\|=1,\\
 &=\sum_{j\in J_F^c}\bigg | \langle x-y_k, x_j\rangle  \bigg |^2+\sum_{j\in J_F}\bigg | \langle x-y_k, x_j\rangle - \Big(\phi_{\lambda} (\langle x,x_j \rangle)-   \langle y_k,x_j \rangle\Big) \bigg |^2\\
 &=\sum_{j\in J_F^c}\bigg | \langle x-y_k, x_j\rangle  \bigg |^2+\sum_{j\in J_F} \bigg | \big|\langle x-y_k, x_j\rangle\big| - \big|\phi_{\lambda} (\langle x,x_j \rangle)-  \langle y_k,x_j \rangle \big|\bigg |^2\\
 &\hspace{2cm}\textrm{ as }  \mathrm{sign}(\langle x-y_k, x_j\rangle)=\mathrm{sign}(  \phi_{\lambda} (\langle x,x_j \rangle)-   \langle y_k,x_j \rangle)\textrm{ for }j\in J_F,  \\
  &\leq \sum_{j\in J}\bigg | \langle x-y_n, x_j\rangle  \bigg |^2 - \sum_{j\in J_F}\bigg | \phi_{\lambda} (\langle x,x_j \rangle)-  \langle y_k,x_j \rangle \bigg |^2 \\
    &\leq \sum_{j\in J}\bigg | \langle x-y_n, x_j\rangle  \bigg |^2 - \sum_{j\in J_F}\bigg | \phi_{\lambda} (\langle x,x_j \rangle)-  \phi_{\lambda} (\langle y_k,x_j \rangle )\bigg |^2 \\
  &=\|x-y_n\|^2-\sum_{j\in J} \bigg | \phi_{\lambda} (\langle x,x_j \rangle)-   \phi_{\lambda} (\langle y_k,x_j \rangle) \bigg |^2 \\
  &\leq \|x-y_k\|^2-C^{-2}\|x-y_k\|^2\textrm{ as $(x_j)_{j\in J}$ does $C$-stable $\lambda$-saturation recovery on $2\|x\|B_H$.}
 \end{align*}

Thus, we have that $\|x-y_{k+1}\|^2\leq (1-C^{-2})\|x-y_k\|^2$. 
\end{proof}

In Theorem \ref{T:FA_P} we assumed that the frame $(x_j)_{j\in J}$ does $C$-stable saturation recovery on $2\|x\|B_H$.  We make this assumption because the $\lambda$-saturated frame algorithm can build a sequence $(y_n)_{n=0}^\infty$ where $\|y_n\|>\|x\|$ for some $n\in\N$.  We always have that $\|y_1\|\leq \|x\|$ when $(x_j)_{j\in J}$ is a Parseval frame, but in the following proposition we give a simple condition which guarantees that $\|y_2\|>\|x\|$.
\begin{prop}\label{P:badFA}
Let $(x_j)_{j=1}^N$ be a Parseval frame of a finite dimensional Hilbert space $H$ and suppose that $x\in H$ and $\lambda>0$ are such that 
\begin{enumerate}[label=\textup{(\alph*)}]
\item $\langle x,x_1\rangle >\lambda$,
\item $\langle x,x_2\rangle =\lambda$,
\item $|\langle x,x_j\rangle| + |\langle x,x_1\rangle-\lambda||\langle x_1,x_j\rangle|\leq\lambda$\hspace{1cm} for $2<j\leq N$,
\item $\langle x,x_1\rangle \|x_1\|^2 < -\langle x_1,x_2\rangle \lambda$.
\end{enumerate}
Then,  if $(y_n)_{n=0}^\infty$ is created by the $\lambda$-saturated frame algorithm we have that  $\|y_2\|>\|x\|$.
\end{prop}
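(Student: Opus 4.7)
The plan is a direct computation, relying on the Parseval reconstruction identity $v=\sum_j\langle v,x_j\rangle x_j$ (valid for any $v\in H$) applied twice, together with the basic Parseval consequence $\|x_j\|\leq 1$ for all $j$. Write $\mu:=\langle x,x_1\rangle-\lambda>0$ by (a). At step $k=0$, the indices $j=1$ and $j=2$ both have $\phi_\lambda(\langle x,x_j\rangle)=\lambda$ and $\langle y_0,x_j\rangle=0<\lambda$, so they contribute to the saturated-positive sum; condition (c) puts every $j>2$ into the unsaturated sum, and no coordinate is negatively saturated. Taking $\alpha_0=\beta_0=1$ and collapsing $\sum_{j\geq 3}\langle x,x_j\rangle x_j=x-\langle x,x_1\rangle x_1-\lambda x_2$ via Parseval yields $y_1=x-\mu x_1$.

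Next I identify the active sums at step $k=1$. A quick computation gives $\langle y_1,x_1\rangle=\lambda+\mu(1-\|x_1\|^2)\geq\lambda$, and $\langle y_1,x_2\rangle=\lambda-\mu\langle x_1,x_2\rangle$, which strictly exceeds $\lambda$ because (d) forces $\langle x_1,x_2\rangle<0$. Thus both candidates for $J_\lambda^+(x,y_1)$ have already been overshot by $y_1$, so $J_\lambda^+(x,y_1)=\varnothing$, while $J_\lambda^-(x,y_1)=\varnothing$ trivially. Hence only the unsaturated sum drives $y_2-y_1$. Using $\langle x-y_1,x_j\rangle=\mu\langle x_1,x_j\rangle$ for every unsaturated index, and applying Parseval a second time to rewrite
\begin{equation*}
    \sum_{j\geq 3}\langle x_1,x_j\rangle x_j \;=\; x_1-\|x_1\|^2 x_1-\langle x_1,x_2\rangle x_2,
\end{equation*}
I obtain $y_2 = x-\mu\|x_1\|^2 x_1-\mu\langle x_1,x_2\rangle x_2$.

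To finish, I expand the norm and substitute $\langle x,x_2\rangle=\lambda$ from (b):
\begin{equation*}
    \|y_2\|^2-\|x\|^2 \;=\; 2\langle x,y_2-x\rangle+\|y_2-x\|^2 \;=\; -2\mu\bigl(\|x_1\|^2\langle x,x_1\rangle+\lambda\langle x_1,x_2\rangle\bigr)+\|y_2-x\|^2.
\end{equation*}
Condition (d) says precisely that the parenthesized quantity is negative, so the first term is strictly positive and the second is nonnegative; therefore $\|y_2\|>\|x\|$.

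The main conceptual step is the overshoot-deactivation phenomenon in the middle paragraph: once $\langle y_1,x_2\rangle$ exceeds $\lambda$, the algorithm drops $j=2$ from the saturated-positive correction at the next iterate, removing a contribution that would otherwise pull $y_2$ back in the $-x_2$ direction. Condition (d) is tuned so that this missing correction dominates the Parseval-controlled contraction in the $x_1$ direction, forcing $y_2$ outside $\|x\|B_H$. Everything else is mechanical bookkeeping with Parseval.
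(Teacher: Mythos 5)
Your proof is correct and follows essentially the same route as the paper's: derive $y_1=x-\mu x_1$, check that $\langle y_1,x_1\rangle\geq\lambda$ and $\langle y_1,x_2\rangle>\lambda$ so that no saturated correction is applied at the second step, obtain $y_2=x-\mu\|x_1\|^2x_1-\mu\langle x_1,x_2\rangle x_2$, and expand the norm using condition (d); you even carry out explicitly the final expansion that the paper leaves to the reader. Note that, like the paper's own proof, you treat the boundary index $j=2$ (where $\langle x,x_2\rangle=\lambda$) as a positively saturated coordinate rather than as a member of $J_{\lambda}(x)$, which is the reading the proposition requires.
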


\begin{proof}
As $|\langle x,x_j\rangle|\leq \lambda$ for all $1< j\leq N$, $\langle x,x_1\rangle>\lambda$, and $(x_j)_{j=1}^N$ is a Parseval frame, we have that
\begin{equation}\label{E:y0}
y_1=x -(\langle x,x_1\rangle-\lambda)x_1.
\end{equation}
Condition (d) implies that 
 $\|x_1\|< 1$ and hence it follows from \eqref{E:y0} that  $\langle y_1,x_1\rangle> \lambda$.  As $\langle x,x_2\rangle =\lambda$ and $\langle x_2,x_1\rangle <0$ we have that $\langle y_1,x_2\rangle >\lambda$.  By (c) we have that $|\langle y_1,x_j\rangle|\leq \lambda$ for all $2<j\leq N$.
Using that $(x_j)_{j=1}^N$ is a Parseval frame, we may simplify the formula for $y_2$ to obtain
\begin{equation}\label{E:y1}
y_2=x -(\langle x,x_1\rangle-\lambda)\|x_1\|^2 x_1-(\langle x,x_1\rangle-\lambda)\langle x_1,x_2\rangle x_2.
\end{equation}
By expanding $\|y_2\|^2=\langle y_2,y_2\rangle$ and applying (d), one can check that $\|y_2\|>\|x\|$.

\end{proof}

\section{Numerical Implementation}\label{S:exp}

The $\lambda$-saturated frame algorithm described by \eqref{E:SFA} allows for substantial flexibility in implementation due to the fact that the constants $\alpha_{k}$ and $\beta_{k}$ can be chosen at each iteration.  Recall that the coefficient $\alpha_{k}$ is used to control the contribution of the unsaturated frame coefficients of the vector $x$, while $\beta_{k}$ limits the contribution of any saturated frame coefficients included for that iteration.  If $\beta_{k}=0$, then the $\lambda$-saturated frame algorithm uses only the unsaturated frame coefficients of $x$ and coincides with the linear frame algorithm corresponding to the unsaturated frame vectors.  In order to demonstrate the benefit of including saturated frame coefficients in the $\lambda$-saturated frame algorithm, as described by Theorem \ref{T:satfa}, we will present the results of a series of numerical experiments.  Our results are in line  with previous experiments which have shown that algorithms which use all the coordinates typically outperform algorithms which use only the unsaturated coordinates \cite{LBDB11}.

The numerical experiments, implemented in MATLAB, will compare two distinct implementations of the $\lambda$-saturated frame algorithm using randomly generated frames consisting of 30 unit vectors in $\mathbb{R}^{10}$ for the recovery of a randomly chosen unit vector $x\in \mathbb{R}^{10}$.  The first implementation corresponds to the linear frame algorithm applied to the unsaturated frame vectors, using $\alpha_{k} = 2/(A+B)$ and $\beta_{k}=0$.  Here, $A$ and $B$ are the optimal frame bounds for the entire frame $(x_{j})_{j=1}^{30}$.  The second implementation is nonlinear and uses $\alpha_{k} = \beta_{k} = 2/(A+B)$.  Each experiment uses a fixed positive value for $\lambda$ and consists of 1000 trials.  The frame $(x_{j})_{j=1}^{30}$ and vector $x$ are chosen randomly for each trial and the error $\Vert y_{k}-x\Vert$ is recorded for each iteration $1\le k\le 50$.  Unit vectors are constructed by first generating a vector in $\mathbb{R}^{10}$ with coordinates chosen according to the standard normal distribution and then rescaling the resulting vector to have unit norm.

Figures \ref{fig:error} and \ref{fig:error_red} show the results of four experiments corresponding to $\lambda = 0.16$, $\lambda=0.24$, $\lambda=0.32$, and $\lambda=0.4$.  In Figure \ref{fig:error}, the data markers represent the mean error $\Vert y_{k}-x\Vert$ for each algorithm as a function of the iteration number, plotted on a log-linear scale.  In all cases, it is evident that the nonlinear $\lambda$-saturated frame algorithm outperforms the linear frame algorithm based only on unsaturated coefficients and, in particular, the inclusion of saturated frame coefficients leads to a substantial improvement in the first few iterations, as evidenced by the steeper slope between consecutive iterations.  As expected, the convergence rate for both algorithms improves as $\lambda$ is increased.  As the number of iterations increases, the coefficients $\langle y_{k},x_{j}\rangle$, $j\in J_{\lambda}^{+}(x)\cup J_{\lambda}^{-}(x)$, will approach $\langle x,x_{j}\rangle$ and the benefit of utilizing saturated frame coefficients is diminished.  This is the reason that the linear and nonlinear frame algorithms show the same trend for later iterations. 

\begin{figure}[htp]
\centering
\begin{minipage}[t]{0.45\textwidth}
\vspace{0pt} \centering
\includegraphics[width=0.85\textwidth]{./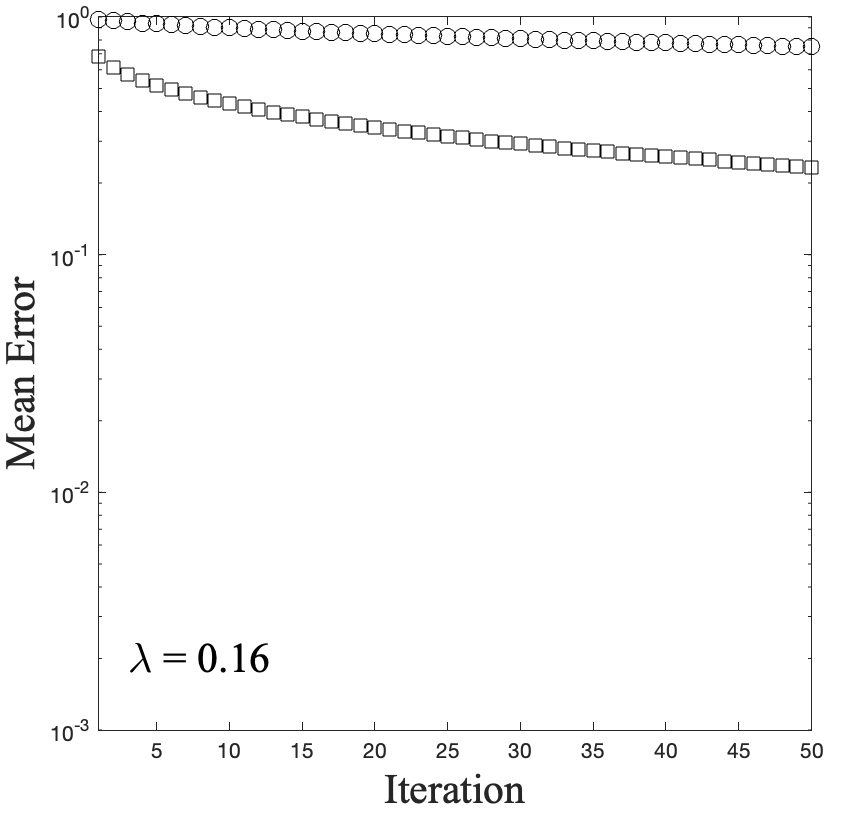}

(a)
\end{minipage} \begin{minipage}[t]{0.45\textwidth}
\vspace{0pt} \centering
\includegraphics[width=0.85\textwidth]{./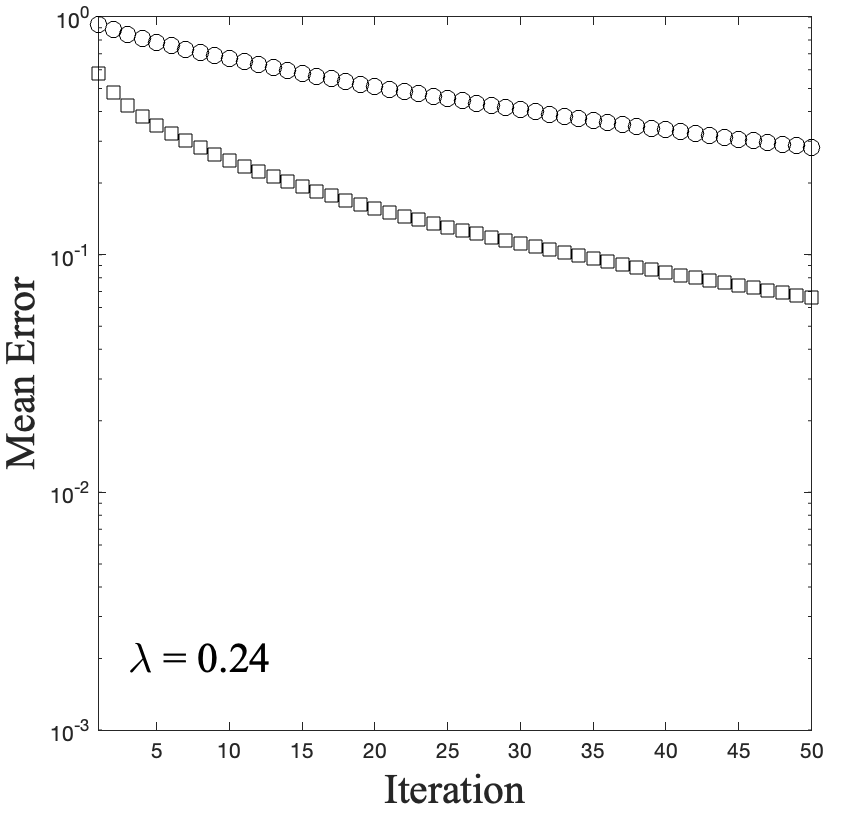}

(b)
\end{minipage}

\bigskip
\begin{minipage}[t]{0.45\textwidth}
\vspace{0pt} \centering
\includegraphics[width=0.85\textwidth]{./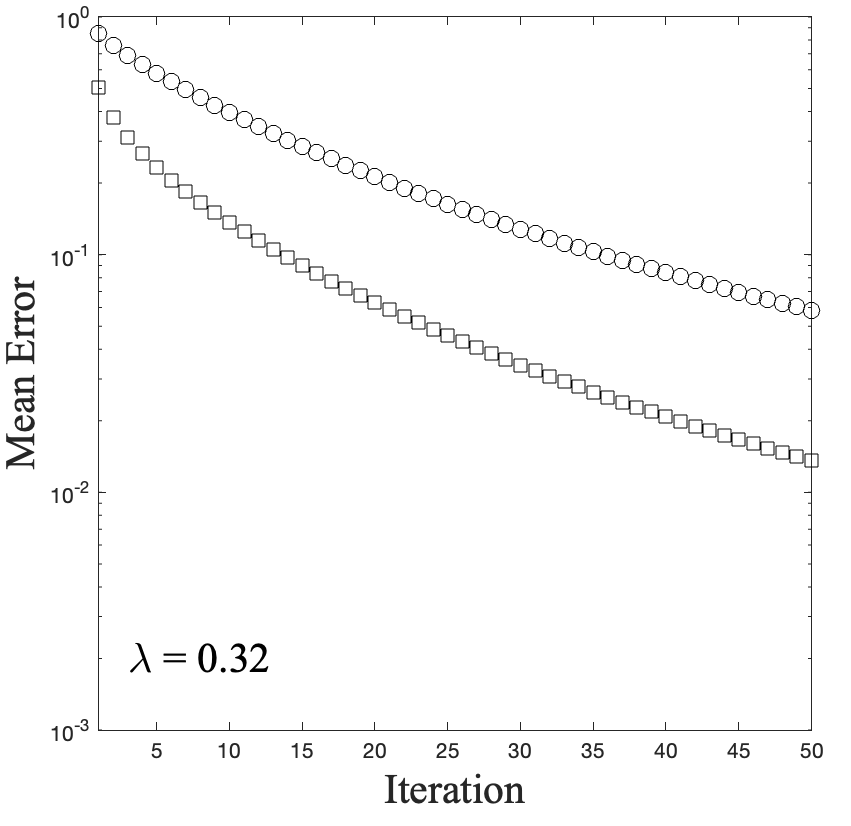}

(c)
\end{minipage} \begin{minipage}[t]{0.45\textwidth}
\vspace{0pt} \centering
\includegraphics[width=0.85\textwidth]{./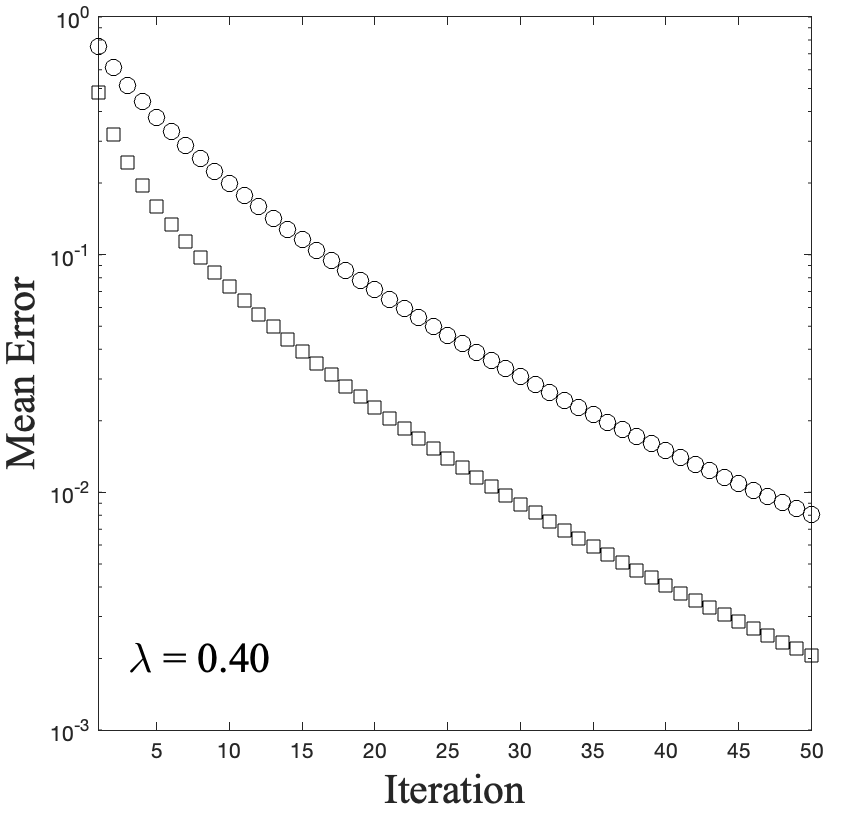}

(d)
\end{minipage}

\caption{Mean error $\Vert y_{k}-x\Vert$ for the linear frame algorithm ($\circ$) and non-linear frame algorithm ($\square$) using random frames of 30 vectors for $\R^{10}$: (a) $\lambda=0.16$ (b) $\lambda=0.24$ (c) $\lambda=0.32$ (d) $\lambda=0.4$.} \label{fig:error}
\end{figure}

Each trial allows for a head-to-head comparison of the two algorithms, which can be quantified by comparing the squared-error on a logarithmic scale.  This quantity, measured in decibels (dB), will be referred to as the \emph{error reduction} and is defined by
$$ \text{Error Reduction} = 10\log_{10}{\left ( \frac{\Vert y_{k} -x\Vert}{\Vert \tilde{y}_{k}-x\Vert} \right )^{2}},$$

\noindent
where $y_{k}$ and $\tilde{y}_{k}$ represent the approximation obtained using the linear and nonlinear frame algorithms after $k$ iterations, respectively.  As $\log_{10}{2} \approx 0.3$, a reduction of the squared-error by a factor of 2 corresponds to a 3 dB error reduction.  Figure \ref{fig:error_red} plots the mean error reduction of the $\lambda$-saturated frame algorithm relative to the linear frame algorithm, including error bars indicating the first and third quartiles from the 1000 trials.  In all four experiments, the mean error reduction after 50 iterations exceeds 9 dB, which shows that, on average, the $\lambda$-saturated frame algorithm reduces the error by a factor of eight in this application.

\begin{figure}[htp]
\centering
\begin{minipage}[t]{0.45\textwidth}
\vspace{0pt} \centering
\includegraphics[width=0.85\textwidth]{./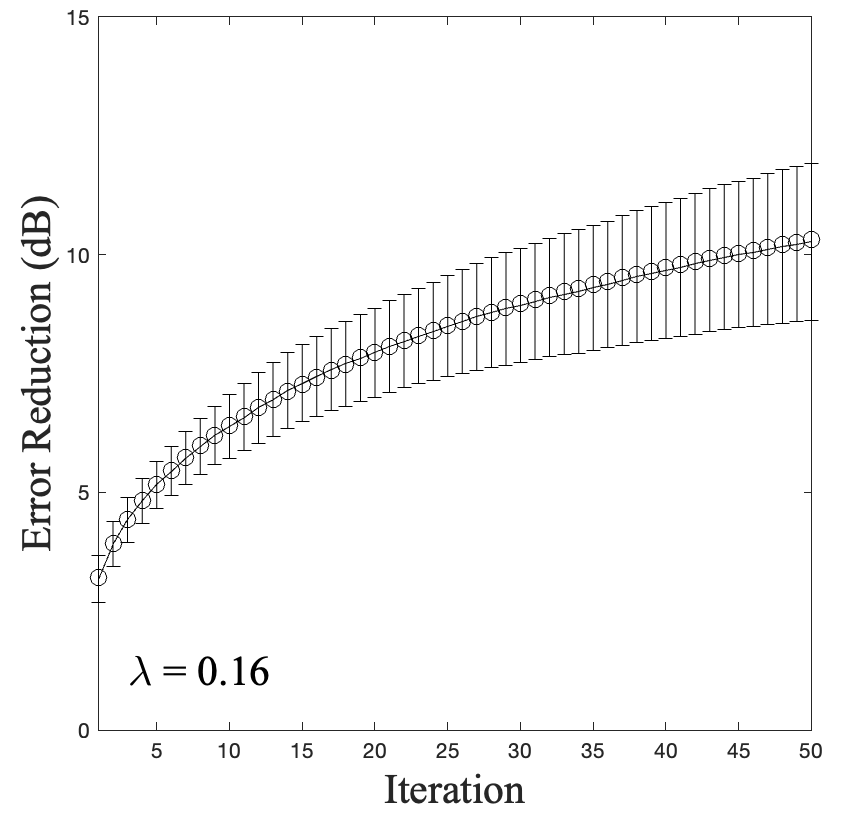}

(a)
\end{minipage} \begin{minipage}[t]{0.45\textwidth}
\vspace{0pt} \centering
\includegraphics[width=0.85\textwidth]{./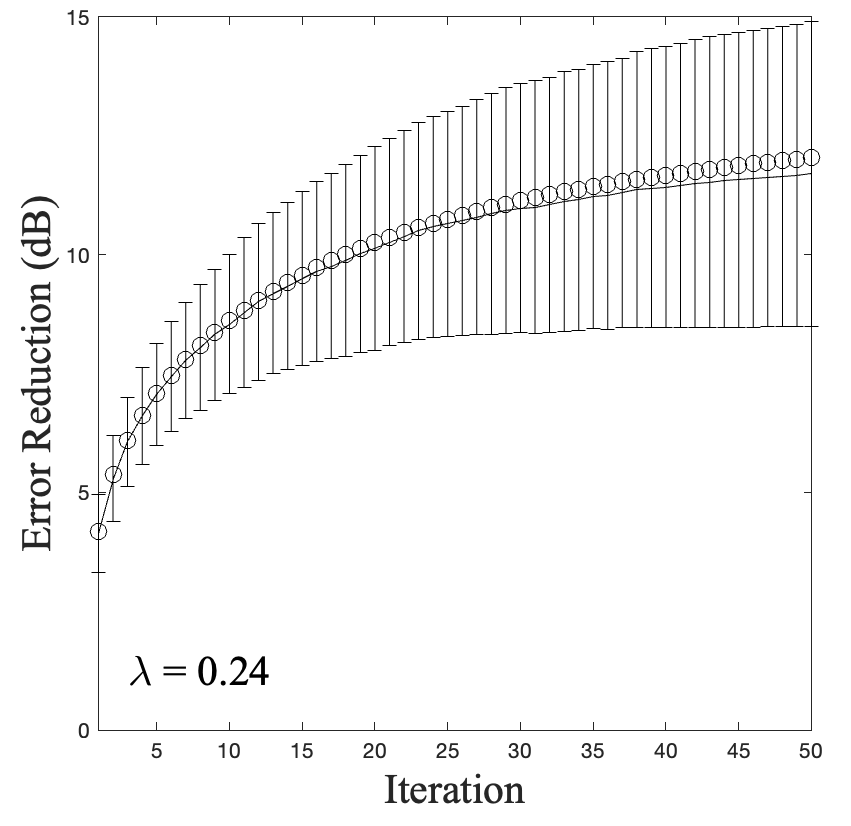}

(b)
\end{minipage}

\bigskip
\begin{minipage}[t]{0.45\textwidth}
\vspace{0pt} \centering
\includegraphics[width=0.85\textwidth]{./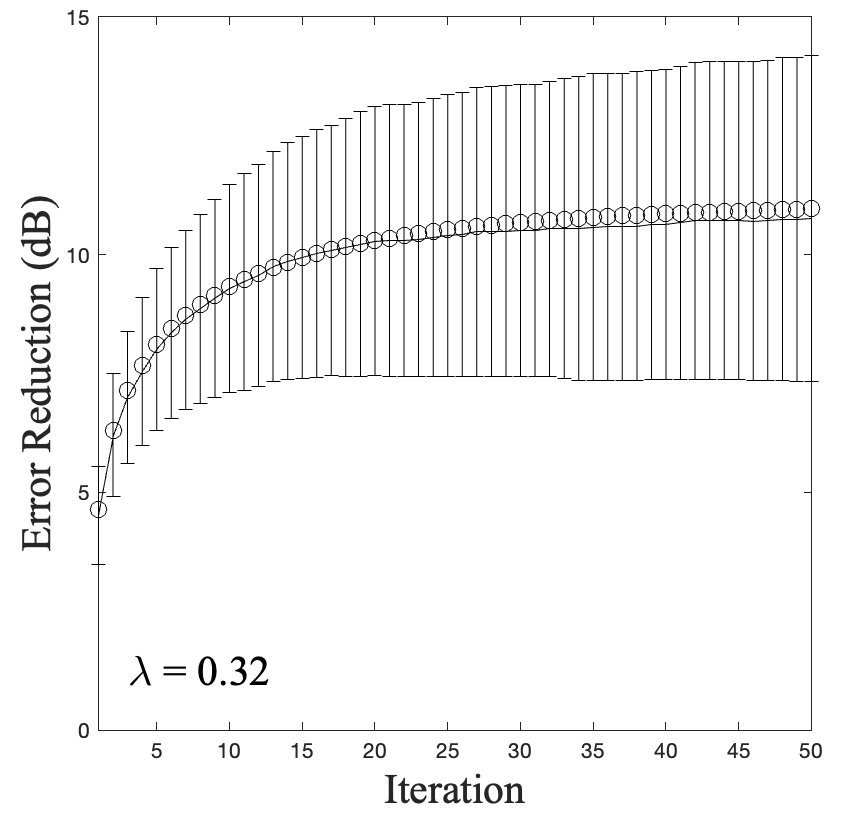}

(c)
\end{minipage} \begin{minipage}[t]{0.45\textwidth}
\vspace{0pt} \centering
\includegraphics[width=0.85\textwidth]{./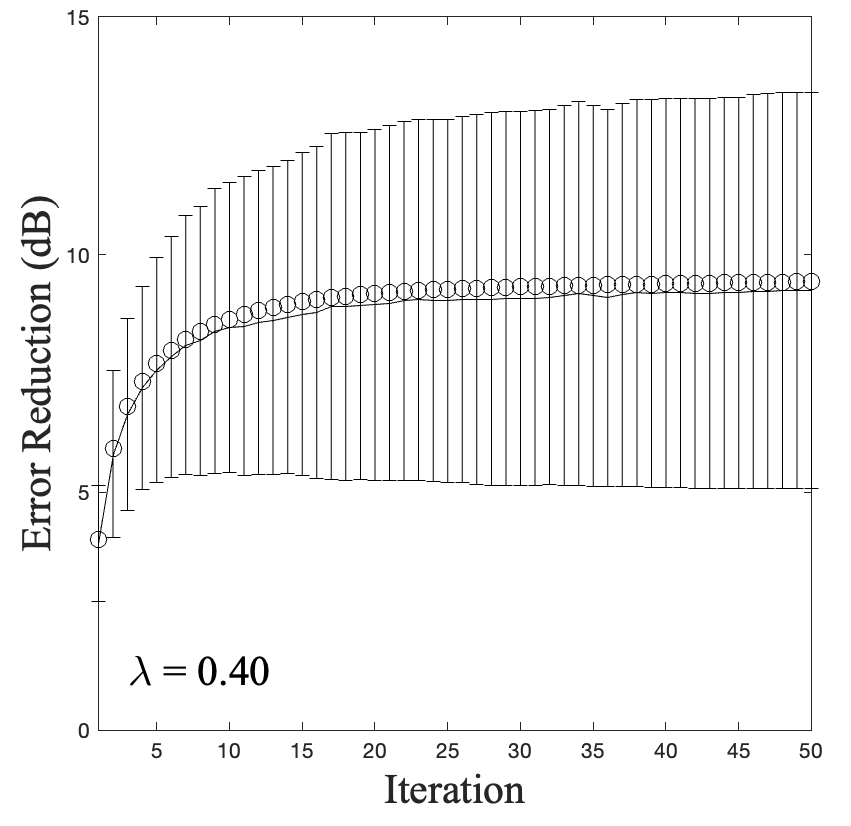}

(d)
\end{minipage}

\caption{Error reduction of the $\lambda$-saturated frame algorithm relative to the linear frame algorithm using random frames of 30 vectors for $\R^{10}$: (a) $\lambda=0.16$ (b) $\lambda=0.24$ (c) $\lambda=0.32$ (d) $\lambda=0.4$.} \label{fig:error_red}
\end{figure}

\newpage

\end{document}